\newtheorem{thm}{Theorem}[section]
\newtheorem{prop}[thm]{Proposition}
\theoremstyle{definition}
\newtheorem{defn}[thm]{Definition}
\theoremstyle{definition}
\newtheorem{example}[thm]{Example}
\begin{document}

\bibliographystyle{plain}
\title{Relations among conditional probabilities}
\author{Jason Morton}

\maketitle
\begin{abstract}
We describe a Gr\"obner basis of relations among conditional probabilities in a discrete probability space, with any set of conditioned-upon events. They may be specialized to the partially-observed random variable case, the purely conditional case, and other special cases. We also investigate the connection to generalized permutohedra and describe a ``conditional probability simplex.''
\end{abstract}

\newcommand{\scrE}{\mathscr E}
\newcommand{\C}{\mathbb{C}}
\newcommand{\Z}{\mathbb{Z}}
\newcommand{\R}{\mathbb{R}}
\newcommand{\N}{\mathbb{N}}
\newcommand{\PP}{\mathbb{P}}
\newcommand{\TT}{\mathbb{T}} 
\newcommand{\mcA}{\mathcal{A}}
\newcommand{\mcB}{\mathcal{B}}
\newcommand{\mcZ}{\mathcal{Z}}
\newcommand{\mbff}{\mathbf{f}}
\newcommand{\bfp}{\mathbf{p}}
\newcommand{\bfq}{\mathbf{q}}

\newcommand{\rowspan}{\operatorname{rowspan}}
\newcommand{\rank}{\operatorname{rank}}
\newcommand{\frkF}{\mathfrak{F}}
\newcommand{\conv}{\operatorname{conv}}
\newcommand{\mconv}{\operatorname{mconv}}
\newcommand{\functo}{\rightarrow}
\newcommand{\ra}{\rightarrow}
\newcommand{\supp}{\operatorname{supp}}
\newcommand{\initial}{\operatorname{in}}
\newcommand{\union}{\cup}
\newcommand{\intersection}{\cap}
\newcommand{\pos}{\operatorname{pos}}
\newcommand{\nbhd}{\operatorname{nbhd}}
\newcommand{\Spec}{\operatorname{Spec}}
\newcommand{\isomorphic}{\cong}
\newcommand{\isom}{\isomorphic}
\newcommand{\piI}{p_{i|I}}
\newcommand{\piJ}{p_{i|J}}
\newcommand{\piK}{p_{i|K}}
\newcommand{\pjI}{p_{j|I}}
\newcommand{\pjJ}{p_{j|J}}
\newcommand{\pjK}{p_{j|K}}
\newcommand{\pkK}{p_{k|K}}
\newcommand{\pII}{p_{I|I}}
\newcommand{\pJJ}{p_{J|J}}
\newcommand{\pJK}{p_{J|K}}
\newcommand{\sinpioverthree}{0.866025404}

\section{Relations among conditional probabilities}

 In 1974, Julian Besag \cite{Besag1974} 
 discussed the ``unobvious and highly restrictive consistency conditions'' among conditional probabilities.  
In this paper we give an answer in the discrete case to the question  {\em  
What conditions must a set of conditional probabilities satisfy in order to be compatible with some joint distribution?
}

Let  $\Omega = \{1, \dots, m\}$ be a finite set of singleton events, and let $p=(p_1, \dots, p_m)$ be a probability distribution on them.  
Let $\scrE$ be a set of observable events which will be conditioned on, each a set of at least 2 singleton events.  Then for events $I \subset J$, $J$ in $\scrE$,  we can assign conditional probabilities for the chance of $I$ given $J$, denoted $p_{I|J}$.  Settling Besag's question then becomes a matter of determining the {\em relations} that must hold among the quantities $p_{I|J}$. 
For example, Besag gives the relation (see also \cite{Besag1972a}), 
\begin{equation} \label{eq:BesagRelation}
\frac{P(\mathbf{x})}{P(\mathbf{y})} = \prod_{i=1}^n \frac{P(x_i|x_1, \dots, x_{i-1}, y_{i+1}, \dots, y_n)}{P(y_i|x_1, \dots, x_{i-1}, y_{i+1}, \dots, y_n)}.
\end{equation}
Since there are in general infinitely many such relations, we would like to organize them into an ideal and provide a nice basis for that ideal.  A quick review of language of ideals, varieties, and Gr\"obner bases appears in Geiger et al. \cite[p. 1471]{TAGM} and more detail in Cox et al. \cite{Cox1997}.   In Theorem \ref{thm:CPuniversalGB}, we generalize relations such as  \eqref{eq:BesagRelation} and Bayes' rule to give a universal Gr\"obner basis of this ideal, a type of basis with useful algorithmic properties.

The second result generalized in this paper is due to Mat\'{u}\v{s} \cite{Matus2003Conditional}.  This states that the space of conditional probability distributions $(p_{i|ij})$ conditioned on events of size two maps homeomorphically onto the permutohedron.  In Theorem \ref{thm:MultiMomentMap}, we generalize this result to arbitrary sets $\scrE$ of conditioned-upon events.  The resulting image is a {\em generalized permutohedron} \cite{Postnikov, Ziegler1995}.  This is a polytope which provides a canonical, conditional-probability analog to the probability simplex under the correspondance provided by toric geometry \cite{GBCP} and the theory of exponential families.

Work on the subject of relations among conditional probabilities has primarily focused on the case where the events in $\scrE$ correspond to observing the states of a subset of $n$ random variables.  
Arnold et. al. \cite{Arnold1999} develop the theory for both discrete and continuous random variables, particularly in the case of two random variables, and cast the compatibility of two families of conditional distributions as a solutions to a system of linear equations.  Slavkovic and Sullivant \cite{SlavkovicSullivant2006} consider the case of compatible full conditionals, and compute related unimodular ideals. 

This paper is organized as follows.
In Section \ref{sec:CPDist}, we introduce some necessary definitions.  
In Section \ref{sec:UnivGB}, we give compatibility conditions in the general case of $m$ events in a discrete probability space, with any set $\scrE$ of conditioned-upon events.  These conditions come in the form of a universal Gr\"obner basis, which makes them particularly useful for computations: as a result, they may be specialized to the partially observed random variable case, the purely conditional case, and other special cases simply by changing $\scrE$.  
In \cite{3CEX, CRT}, we have seen that permutohedra and generalized permutohedra \cite{Postnikov} play a central role in the geometry of conditional independence; the same is true of conditional probability.
The geometric results of Mat\'u{\v s} \cite{Matus2003Conditional} map the space of conditional probability distributions (Definition \ref{def:CPdist}) for all possible conditioned events $\scrE = \{ I \subset [m]: |I| \geq 2\}$ onto the permutohedron $\mathbf{P}_{m-1}$.  See Figure \ref{fig:P4updown} for a diagram of the $3$-dimensional permutohedron.  In Section \ref{sec:CPmoment}, we will discuss how to extend this result to general $\scrE$, in which case we obtain generalized permutohedra as the image. 
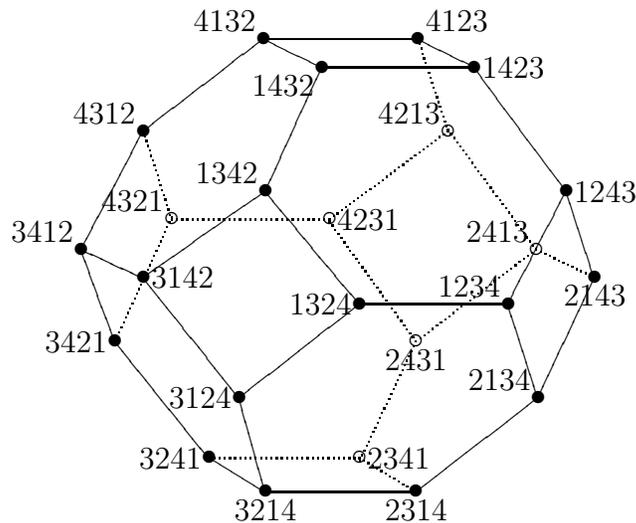
\begin{figure}[htb]\label{UpDown4}
\[
 \begin{xy}<25mm,0cm>:
(1,0)  ="3214"  *+!U{3214} *{\bullet};
(1.8,0) ="2314"  *+!U{2314} *{\bullet};
(.7,.18)  ="3241"  *+!R{3241} *{\bullet};
(1.5,.18)  ="2341"  *+!L{2341} *{\circ}; 
(.86,.5)  ="3124"  *+!R{3124} *{\bullet};
(2.45,.5)  ="2134"  *+!DR{2134} *{\bullet};
(.2,.8)  ="3421"  *+!R{3421} *{\bullet};
(1.8 ,.8)  ="2431"  *+!U{2431} *{\circ}; 
(1.5,1)  ="1324"  *+!R{1324} *{\bullet};
(2.29,1)  ="1234"  *+!DR{1234} *{\bullet};
(.35,1.14)  ="3142"  *+!L{3142} *{\bullet};
(2.75,1.14)  ="2143"  *+!U{2143} *{\bullet};
(.02,1.29)  ="3412"  *+!DR{3412} *{\bullet};
(2.44,1.29)  ="2413"  *+!DR{2413} *{\circ}; 
(.5,1.45)  ="4321"  *+!DR{4321} *{\circ}; 
(1.34,1.45)  ="4231"  *+!L{4231} *{\circ}; 
(1,1.6)  ="1342"  *+!DR{1342} *{\bullet};
(2.6,1.6)  ="1243"  *+!L{1243} *{\bullet};
(.35,1.92)  ="4312"  *+!DR{4312} *{\bullet};
(1.97,1.92)  ="4213"  *+!DR{4213} *{\circ}; 
(1.3,2.26)  ="1432"  *+!UR{1432} *{\bullet};
(2.11,2.26)  ="1423"  *+!L{1423} *{\bullet};
(.99,2.41)  ="4132"  *+!DR{4132} *{\bullet};
(1.81,2.41)  ="4123"  *+!DL{4123} *{\bullet};
"3214";"2314" **@{-}; 
"3241";"2341" **@{.}; 
"3241";"3214" **@{-}; 
"2341";"2314" **@{.}; 
"2134";"1234" **@{-}; 
"2143";"1243" **@{-}; 
"1234";"1243" **@{-}; 
"2134";"2143" **@{-}; 
"4132";"4123" **@{-}; 
"1432";"1423" **@{-}; 
"1432";"4132" **@{-}; 
"4123";"1423" **@{-}; 
"4312";"3412" **@{-}; 
"4321";"3421" **@{.}; 
"4312";"4321" **@{.}; 
"3412";"3421" **@{-}; 
"4213";"2413" **@{.}; 
"4231";"2431" **@{.}; 
"4213";"4231" **@{.}; 
"2413";"2431" **@{.}; 
"1342";"1324" **@{-}; 
"3142";"3124" **@{-}; 
"1342";"3142" **@{-}; 
"1324";"3124" **@{-}; 
"2314";"2134" **@{-}; 
"3124";"3214" **@{-}; 
"3421";"3241" **@{-}; 
"3412";"3142" **@{-}; 
"1324";"1234" **@{-}; 
"1432";"1342" **@{-}; 
"4312";"4132" **@{-}; 
"1423";"1243" **@{-}; 
"2341";"2431" **@{.}; 
"4321";"4231" **@{.}; 
"2413";"2143" **@{.}; 
"4123";"4213" **@{.}; 
\end{xy}
\]
\caption{The permutohedron ${\bf P}_4$.} \label{fig:P4updown}
\end{figure}
This will be accomplished using a version of the moment map of toric geometry (Theorem \ref{thm:MomentMap}).
In Section \ref{sec:caseofrv}, we discuss how to specialize our results to the case of $n$ partially observed random variables, including as an example how to recover the relation (\ref{eq:BesagRelation}).
Finally, in Section \ref{sec:BayesExplanation} we use this specialization to explain the relationship of Bayes' rule to our constructions.  In the Appendix we recall a few necessary facts about toric varieties.

\section{Conditional probability distributions} \label{sec:CPDist}
Let $\scrE$ be a collection of subsets $I$, with $|I| \geq 2$, of $[m]=\Omega = \{1, \dots, m\}$. 
Let $\C[\scrE]$ denote the {\em event algebra},  the polynomial ring with indeterminates $p_{i|I}$ for all $I \in \scrE$ and $i \in I$, i.e. one unknown for each elementary conditional probability.  Then we denote by
\[
\|\scrE\| = \sum_{I \in \scrE} |I|
\] 
the number of variables of $\C[\scrE]$.  We write $p_i$ for $p_{i|[m]}$ when $[m] \in \scrE$.
The unknowns of $\C[\scrE]$ are meant to represent conditional probabilities, as we now explain.  The set $\{1, \dots, m\}$ indexes the $m$ disjoint events, and a point $(p_1, \dots, p_m) \in \R^m_{\geq 0}$ with $\sum_j p_j = 1$ represents a probability distribution on these events. 
When $p_j >0$ for all $j$, the {\em conditional probability} of event $i$ given event $I$ containing it is 
\begin{equation} \label{eq:simpleCP}
p_{i|I} = \frac{p_i}{\sum_{j \in I} p_j}.
\end{equation}
To extend this notion to the case $P(I) = \sum_{j \in I} p_j = 0$, and to be able to deal with multiple conditioning sets, we make the following standard definition \cite{Billingsley}, considered in this form by Mat\'u{\v s} \cite{Matus2003Conditional}.

\begin{defn} \label{def:CPdist}
A {\em conditional probability distribution} for $\scrE$ is a point $(p_{i|I} \; : \; i \in I \in \scrE) \in \R^{\|\scrE\|}_{\geq 0}$ such that for all $J, K \in \scrE$ with $J \subset K$,
\begin{itemize}
\item[(i)] $\sum_{i\in J} p_{i|J} = 1$
\item[(ii)] for all $i \in J$, $p_{i|K} = p_{i|J} \sum_{j \in J} p_{j|K}$.
\end{itemize}
\end{defn}

Observe that (ii) is a relative version of \eqref{eq:simpleCP}, as  \eqref{eq:simpleCP} follows from (ii) with $K=[m]$,  $J=I$, and $\sum_{i \in I} p_i \neq 0$.  If on the other hand $\sum_{j \in J} p_{j|K} = 0$, the whole probability simplex $\Delta_J : = \{ (\pjJ)_{j \in J} : \pjJ \geq 0, \sum_{j\in J} \pjJ =1 \}$ satisfies the definition.  This freedom is known in probability theory as {\em versions of conditional probability} \cite{Billingsley}.
In algebraic geometry, this corresponds to the notion of a blow-up, \cite{Harris1995} and the simplex $\Delta_J$ to the exceptional divisor.
Before we give a homogenized version of Definition \ref{def:CPdist}, we consider the homogenized version of probability.
\subsection{A projective view of probability}\label{sec:ProjProb}
Consider a probability space with $m$ disjoint atomic events $([m], 2^{[m]},P)$.  The space of probability distributions $P$ on them is typically represented as a {\em probability simplex}, where each $P(i)$ is a coordinate $p_i$ such that $p_i\geq 0$ and $\sum_i p_i =1$.  We will be describing families of probability distributions in terms of {\em algebraic varieties}, and we prefer to think of points $(p_1: \cdots : p_m)$ as lying in complex projective space.  This is equivalent to letting $V=\C\{e_1, \dots,e_m\} \isom \C^m$ be the complex vector space spanned by the outcomes (singleton events) and considering points $p \in \PP V$ as representing mixtures over outcomes or probability distrubutions.   There are two ways to match up the notion of the probability simplex with that of complex projective space.  One way to do so, {\em restriction}, identifies the probability simplex $\Delta_{m-1}$ with the real, positive part of the affine open $\sum_i y_i \neq 0$ of the $\PP^{m-1}$ with homogeneous coordinates $(y_1:y_2: \cdots :y_m)$ as illustrated in Figure \ref{fig:projprobsimplex}.
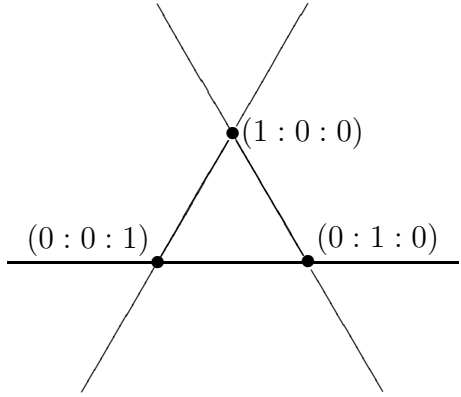
\begin{figure}[htb]
\[ 
   \begin{xy}<2cm,0cm>:
(2,0) *+!DR{(0:0:1)} *{\bullet};
      p+(.5,\sinpioverthree) *+!L{(1:0:0)} *{\bullet} **@{-};
        p+(.5,\sinpioverthree) **@{-},
        p+(-1,-1.72)  **@{-},
      p+(.5,-.86) *+!DL{(0:1:0)} *{\bullet} **@{-};
        p+(.5,-.86) **@{-},
        p+(-1,1.72) **@{-},
      p+(-1,0) **@{-};
        p+(-1,0) **@{-},
        p+(2,0) **@{-},
   \end{xy}
\]
\caption{Probability simplex in the projective plane} \label{fig:projprobsimplex}
\end{figure}

Alternatively we can use {\em projection}, equivalent in the special case that $(y_1: \cdots : y_m) \in \Delta_{m-1}$, via the moment map (Theorem \ref{thm:MomentMap}).
The identity matrix $\mcA = I_m$ comprised of standard unit vectors $e_i$ defines the probability simplex $\Delta_{m-1}=\conv(\mcA)$. The toric variety $Y_{\mcA}$ is then the projective space $\PP^{m-1}$ and the moment map is:
\[
\mu: \PP^{m-1} \functo \Delta_{m-1}
\]
\[
\mu((y_1: \dots : y_m))= \frac{1}{\sum_i |y_i|} |y_i| e_i
\]
The moment map $\mu$ is the identity map on the probability simplex, but allows us to define a point on the probability simplex for more general points in complex projective space.  The fiber over any of these points is the torus $(\mathbb{S}^1)^n$, a product of $m$ unit circles, since $\mu(y_1: \dots, :y_m) = \mu(e^{i\theta_1} y_1 : \dots : e^{i \theta_m} y_m )$.  A similar point of view appears in quantum physics; here $V=\C\{x: x \; \text{a classical state}\}$ is the Hilbert space representing quantum state and the modified moment map $\mu' (y): \frac{1}{\sum_i |y_i|^2} |y_i|^2 e_i$ defines the probability of observing a classical state (singleton event) \cite{Nielsen2000}.

One interpretation of this freedom is that it suggests there are circumstances where allowing probabilities to be negative and even complex in intermediate computations might be useful.  This may seem odd, but it can be argued that negative probabilities are 
already implicitly employed \cite{Feynman1987}.  For example, characteristic function methods implicitly write a density as a linear combination of basis functions with ranges unrestricted to $\R_{\geq 0}$.
Even if we are uncomfortable with such interpretations, the compactification and homogenization can simply be viewed as a convienient algebraic trick to make it easy to determine the relations among conditional probabilities we are ultimately interested in.  Moreover, for most purposes $\C$ can be replaced with $\R$ \cite{TAGM} as the base field for our ring, and these relations are unchanged.

\subsection{Homogeneous conditional probability}
 Analogously to the projective version of probability in Section \ref{sec:ProjProb}, where we replaced the requirement that probabilities $p_1, \dots, p_m$ sum to one with viewing them as coordinates of a point in projective space, we now define a multihomogeneous version of Definition \ref{def:CPdist}.  
Now, a conditional probability distribution is represented by a point in the product of projective spaces.  This product has one $\PP^{|I|-1}$ for each event $I \in \scrE$ which is conditioned upon, and each factor space $\PP^{|I|-1}$ is equipped with homogeneous coordinates $(p_{i_1 | I} : \cdots :  p_{i_{|I|} | I})$.

\begin{defn} \label{def:Renyiprob}
A {\em projective conditional probability distribution} for $\scrE$ is a point $\mathbf{p}=((p_{i_1 | I}: \cdots : p_{i_{|I|} |I}), I \in \scrE)$ inside $\prod_{I \in \scrE} \PP^{|I|-1}$ such that for all $J, K \in \scrE$ and $i \in J \subset K$, 
\[
(\sum_{j \in J} \pjJ) \piK = \piJ (\sum_{j \in J} \pjK)
\]
\end{defn}
\noindent
Definition \ref{def:Renyiprob} specifies the following ideal in the event algebra $\C[\scrE]$:
\[
J_\scrE = \langle (\sum_{j \in J} \pjJ) \piK - \piJ (\sum_{j \in J} \pjK) \; : \; J, K \in \scrE, \;  i \in J \subset K \rangle.
\]
This ideal consists of all polynomial relations that a point $P=(\piI)$ in $\prod_{I \in \scrE} \PP^{|I|-1}$ must satisfy to be a projective conditional probability distribution.  In particular, any honest conditional probability distribution must satisfy these.  If we denote by $\{e_I : I \in \scrE\}$ a basis of $\Z^{|\scrE|}$, this ideal $J_\scrE$ is multihomogeneous with respect to the grading $\deg(\piI) = e_I$ (see e.g. \cite{MillerSturmfels} for more on such gradings).
In what follows, it will be convenient to abbreviate
$
\pJJ:=\sum_{j \in J} \pjJ.
$ 
Thus $\pJJ$ would be equal to $1$ for honest distributions, by Definition \ref{def:CPdist}, but here we regard it as a linear form in $\C[\scrE]$.  
Let $\alpha_{\scrE}$ denote the product $\prod_{i \in I \in \scrE} \piI$ of all of the $\|\scrE\|$ variables in $\C[\scrE]$, and let $\beta_{\scrE}$ denote the product $\prod_{I \in \scrE} \pII$.  The {\em saturation} $(I:f^\infty)$ of an ideal $I$ is the ideal generated by all polynomials $g$ such that $f^m g\in I$ for some $m$ \cite{GBCP}. Now we define the ideal $I_\scrE$, when $[m] \in \scrE$, by the saturation 
\[
I_\scrE := (J_\scrE : (\alpha_{\scrE} \beta_{ \scrE} )^{\infty}).
\]
When $[m] \notin \scrE$, let $\scrE' = \scrE \union [m]$ and set  $I_\scrE := I_{\scrE'} \intersection \C[\scrE]$.  The purpose of saturation is to make sure the desired behavior occurs when some coordinates are zero; for example, it is necessary to move between the conditional independence ideals \cite{TAGM} generated by expressions $P(X\!=\!x,Y\!=\!y | Z\!=\!z) - P(X\!=\!x | Z\!=\!z) P(Y\!=\!y | Z\!=\!z)$ and by the cross product differences $P(x,y,z)P(x',y',z)-P(x,y',z)P(x',y,z)$ algebraically without assuming anything about the positivity of the probabilities in question.

In the next section, we describe a  matrix  $\mcA_{G}$ such that $I_\scrE$ arises as the toric ideal  $I_{\mcA_G}$ (Section \ref{sec:toricvar}).  Our first main result will be a universal Gr\"obner basis for the toric ideal $I_\scrE$.   Gr\"obner bases, particularly universal Gr\"obner bases, have many algorithmic properties that make them a very complete description of an ideal.  Cox, Little, and O'Shea \cite{Cox1997} give an accessible overview; see also \cite{GBCP,Greuel2002}.

\section{A universal Gr\"obner basis for relations among conditional probabilities} \label{sec:UnivGB}

A {\em Bayes binomial} in $\C[\scrE]$ is a binomial relation of the form
\[
\piK \pjJ - \pjK \piJ
\]
for $i,j \in J \subseteq K$, with $J,K \in \scrE$. Let $I_{{\rm Bayes}(\scrE)}$ denote the ideal they generate. Bayes binomials get their name because they come from Bayes' rule; 
more explanation 
is given in Section \ref{sec:BayesExplanation}.

\begin{prop} \label{prop:BayesSat}
The ideal generated by the Bayes binomials contains $J_\scrE$ and is contained in the saturation of $J_\scrE$ by the probabilities that would sum to one (where again $\beta_\scrE = \prod_{I \in \scrE} p_{I|I}$):
\[
J_\scrE \subseteq I_{{\rm Bayes}(\scrE)} \subseteq (J_\scrE : (\beta_{ \scrE})^{\infty})
\]
 and in particular, $I_{{\rm Bayes}(\scrE)} \subseteq I_\scrE $.
\end{prop}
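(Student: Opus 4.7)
The plan is to observe that both inclusions reduce to a short algebraic manipulation using the abbreviation $\pJJ = \sum_{j \in J} \pjJ$ (and similarly $\pJK = \sum_{j \in J} \pjK$), and that the ``in particular'' statement is then a formal consequence of how $I_\scrE$ is defined from saturations.

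First, for $J_\scrE \subseteq I_{{\rm Bayes}(\scrE)}$, I would rewrite a generator of $J_\scrE$ by distributing the sums:
\[
\pJJ\, \piK - \piJ\, \pJK \;=\; \sum_{j \in J} \bigl( \pjJ\, \piK - \piJ\, \pjK \bigr).
\]
Each summand is exactly a Bayes binomial $\piK \pjJ - \pjK \piJ$ (with $i,j \in J \subseteq K$, all in $\scrE$), so the generator lies in $I_{{\rm Bayes}(\scrE)}$.

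Next, for $I_{{\rm Bayes}(\scrE)} \subseteq (J_\scrE : \beta_\scrE^\infty)$, I would fix a Bayes binomial $b = \piK \pjJ - \pjK \piJ$ with $i,j\in J\subseteq K$ and show that $\pJJ \cdot b \in J_\scrE$. Indeed, applying the defining relations $\pJJ\, \piK \equiv \piJ\, \pJK$ and $\pJJ\, \pjK \equiv \pjJ\, \pJK$ modulo $J_\scrE$ (for $\ell=i$ and $\ell=j$ respectively) gives
\[
\pJJ\bigl(\piK \pjJ - \pjK \piJ\bigr) \;\equiv\; \piJ\, \pJK\, \pjJ - \pjJ\, \pJK\, \piJ \;=\; 0 \pmod{J_\scrE}.
\]
Since $\pJJ$ is one of the factors of $\beta_\scrE = \prod_{I \in \scrE} \pII$, we have $\beta_\scrE \cdot b \in J_\scrE$, hence $b \in (J_\scrE : \beta_\scrE^\infty)$. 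As the Bayes binomials generate $I_{{\rm Bayes}(\scrE)}$, the inclusion follows.

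For the ``in particular'' statement, when $[m] \in \scrE$ we just note
$(J_\scrE : \beta_\scrE^\infty) \subseteq (J_\scrE : (\alpha_\scrE \beta_\scrE)^\infty) = I_\scrE$;
when $[m]\notin\scrE$, set $\scrE' = \scrE \cup \{[m]\}$ and observe that every Bayes binomial for $\scrE$ is also a Bayes binomial for $\scrE'$ and lies in $\C[\scrE]$, so $I_{{\rm Bayes}(\scrE)} \subseteq I_{{\rm Bayes}(\scrE')} \cap \C[\scrE] \subseteq I_{\scrE'} \cap \C[\scrE] = I_\scrE$. The only real subtlety is bookkeeping in the second case; the core algebraic content is the one-line cancellation above, and I do not anticipate a substantive obstacle.
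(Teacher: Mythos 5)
Your proposal is correct and follows essentially the same route as the paper: the first inclusion is the same telescoping identity $\pJJ\piK - \piJ\pJK = \sum_{j\in J}(\pjJ\piK - \piJ\pjK)$, and your congruence computation showing $\pJJ\cdot(\piK\pjJ - \pjK\piJ) \in J_\scrE$ is exactly the paper's explicit combination $\pjJ(\pJJ\piK - \piJ\pJK) - \piJ(\pJJ\pjK - \pjJ\pJK)$ written modulo $J_\scrE$. Your spelled-out treatment of the ``in particular'' claim (including the case $[m]\notin\scrE$ via $\scrE' = \scrE\cup\{[m]\}$) is a correct elaboration of what the paper leaves implicit.
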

\begin{proof}
The ideal $J_\scrE$ is generated by the degree-2 polynomials $\pJJ \piK - \piJ \pJK$ for $J,K \in \scrE$ and $i \in J \subseteq K$. For each $i,j \in J$, we have $a=\pjJ (\pJJ \piK - \piJ \pJK)$ and $b=\piJ(\pJJ \pjK - \pjJ \pJK)$ in  $J_\scrE$, so $a-b = \pJJ ( \pjJ \piK - \pjK \piJ)$ is in $J_\scrE$ and $I_{{\rm Bayes}(\scrE)} \subseteq (J_\scrE : (\beta_{\scrE} )^{\infty})$.  For the first inclusion, if $\pJJ \piK - \piJ \pJK$ is a generator of $J_\scrE$, we may write it as an element $\sum_{j \in J} (\piK \pjJ - \pjK \piJ)$ of $I_{{\rm Bayes}(\scrE)}$.
\end{proof}

Our universal Gr\"obner basis of $I_\scrE$ will be given combinatorially by the cycles of a labeled bipartite graph $G(\scrE)$, defined as follows:\\
{\bf Vertices:} one vertex $u_I$ for each $I \in \scrE$ and one vertex $v_i$ for each $i \in \union_{I \in \scrE} I$\\
{\bf Edges:} a directed edge $u_I \rightarrow v_i$ for each $I \in \scrE$ and $i \in I$\\
{\bf Edge Labels:}  the edge $u_I \rightarrow v_i$ is labeled with the indeterminate $p_{i|I}$.\\

For example, with $n=4$, the labeled graph $G$ for $\scrE = \{ \{1,2\}, \{1,2,3\},$ $\{1,2,3,4\} \}$ is shown in Figure \ref{fig:bipartite12-123-1234}.
\begin{figure}[htb]
\[ \SelectTips{cm}{10}
   \begin{xy}<1cm,0cm>:
(0,0) *+!{123}="123";
      p+(0,1) *+!{1}="1",
      p+(1,-.86) *+!{3}="3",
      p+(-1,-.86) *+!{2}="2";
{\ar@{->}_{p_{1|123}} "123"; "1"};
{\ar@{->}^{p_{2|123}} "123"; "2"};
{\ar@{->}^{p_{3|123}} "123"; "3"};
(-1,.5) *+!{12}="12"; 
{\ar@{->}^{p_{1|12}} "12"; "1"};
{\ar@{->}_{p_{2|12}} "12"; "2"};
(5,0) *+!{1234}="1234";
      p+(2,0) *+!{4}="4";
{\ar@{->}^{p_{4}} "1234"; "4"};
{\ar@{->}@/_{2pc}/^{p_{1}} "1234"; "1"};
{\ar@{->}_{p_3} "1234"; "3"};
{\ar@{->}@/^{2pc}/^{p_2} "1234"; "2"};
   \end{xy}
\]
\caption{Bipartite graph for $\scrE = \{ \{1,2\}, \{1,2,3\}, \{1,2,3,4\} \}$.} \label{fig:bipartite12-123-1234}
\end{figure}
Each oriented cycle  $C$ in the undirected version of $G$ defines a binomial $f_C$ as follows: each edge label is on the positive side of the binomial if its edge is directed with the cycle, and on the negative if against.  For example, in the graph in Figure \ref{fig:bipartite12-123-1234}, consider the cycle $(1234,3,123,1,1234)$.  The edges $p_3$ and $p_{1|123}$ are directed with the cycle and the edges $p_{3|123}$ and $p_1$ are directed against, so the corresponding binomial is $p_3 p_{1|123} - p_{3|123} p_1$.  For a higher degree example, with $n=3$ and $\scrE = \{ \{1,2\},\{1,3\},\{2,3\},\{1,2,3\}\}$, we get $p_{1|12}p_{3|13}p_{2|23}-p_{2|12}p_{3|23}p_{1|13}$ from the outer cycle, as shown in Figure \ref{fig:bipartite12-13-23-123}.
\begin{figure}[htb]
\[ \SelectTips{cm}{10}
   \begin{xy}<1cm,0cm>:
(0,0);
      p+(0,1) *+!{1}="1",
      p+(1,-.7) *+!{3}="3",
      p+(-1,-.7) *+!{2}="2";
(-1,.5) *+!{12}="12"; 
(1,.5) *+!{13}="13"; 
(0,-1.3) *+!{23}="23"; 
{\ar@{->}^{p_{1|12}} "12"; "1"};
{\ar@{->}_{p_{2|12}} "12"; "2"};
{\ar@{->}^{p_{3|13}} "13"; "3"};
{\ar@{->}_{p_{1|13}} "13"; "1"};
{\ar@{->}^{p_{2|23}} "23"; "2"};
{\ar@{->}_{p_{3|23}} "23"; "3"};
   \end{xy}
\]
\caption{Outer cycle of the bipartite graph for $\scrE = \{ \{1,2\},\{1,3\},\{2,3\},\{1,2,3\}\}$.} \label{fig:bipartite12-13-23-123}
\end{figure}
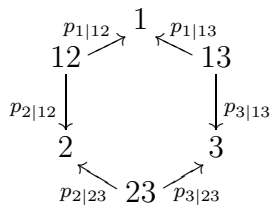
A cycle is {\em induced} if it has no chord.

\begin{thm} \label{thm:CPuniversalGB}  
The binomials defined by the cycles of $G(\scrE)$ give a universal Gr\"obner basis for $I_\scrE$.  Moreover, $I_\scrE$ is generated by the induced cycle binomials, though not necessarily as a Gr\"obner basis.
\end{thm}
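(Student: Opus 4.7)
The plan is to reduce the statement to the well-developed theory of toric ideals of bipartite graphs. The preceding paragraph promises that $I_\scrE$ equals the toric ideal $I_{\mcA_G}$ of a matrix $\mcA_G$ whose columns are indexed by the variables $p_{i|I}$ of $\C[\scrE]$; by construction this matrix is (up to grading choices) the vertex-edge incidence matrix of the bipartite graph $G(\scrE)$, with each variable $p_{i|I}$ corresponding to the edge $u_I \to v_i$. Once this identification is in place, the theorem is essentially a statement about the Graver basis of the toric ideal of a bipartite graph.

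First I would recall the standard fact that the Graver basis $\mathrm{Gr}(\mcA_G)$, i.e.\ the set of all primitive binomials in $I_{\mcA_G}$, is always a universal Gr\"obner basis (Sturmfels, \cite{GBCP}). The task is therefore to identify $\mathrm{Gr}(\mcA_G)$ with the set of cycle binomials $f_C$. Every element of the kernel of a vertex-edge incidence matrix of $G(\scrE)$ corresponds to an integer-weighted edge multiset in which each vertex appears with total signed degree zero; for a bipartite graph this forces the support to decompose into closed walks of even length. A cycle binomial $f_C$ is primitive because its support is a simple cycle $C$ and any proper sub-binomial dividing it would correspond to a sub-walk of $C$, impossible for a simple cycle. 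Conversely, any primitive binomial's support must be a single simple cycle: if the associated walk had a repeated vertex or decomposed into two shorter closed walks, one could factor off a sub-binomial, violating primitivity. This is the standard argument behind Villarreal/Ohsugi--Hibi-type results on edge rings of bipartite graphs, and it gives the first claim.

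For the second claim, I would argue by induction on the length of the cycle $C$. If $C$ is induced we are done; otherwise let $e = (u_I, v_i)$ be a chord. The chord splits $C$ into two strictly shorter cycles $C_1, C_2$, each containing $e$. A direct check, together with the Bayes-binomial manipulation of Proposition \ref{prop:BayesSat}, shows that $f_C = m_1 f_{C_1} \pm m_2 f_{C_2}$ for appropriate monomials $m_1, m_2 \in \C[\scrE]$ supported on the edges of $C$ not used in the corresponding companion cycle. By induction, $f_{C_1}$ and $f_{C_2}$ lie in the ideal generated by induced cycle binomials, hence so does $f_C$. This yields ideal-generation by induced cycles, though of course the reduction of a cycle binomial into induced pieces is by arbitrary polynomial combinations rather than Gr\"obner reductions, explaining the parenthetical caveat in the statement.

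The main obstacle I anticipate is the bookkeeping for the first step: verifying that the saturation defining $I_\scrE$ produces exactly the toric ideal $I_{\mcA_G}$ rather than just containing the cycle binomials. Concretely, one must check that every relation of the form $\pJJ \piK - \piJ \pJK$ becomes, after saturation at $\alpha_\scrE \beta_\scrE$, a sum of cycle binomials in $I_{\mcA_G}$, and conversely that every cycle binomial lies in $(J_\scrE : (\alpha_\scrE \beta_\scrE)^\infty)$. Proposition \ref{prop:BayesSat} already delivers one direction (the Bayes binomials, which include all length-$4$ cycle binomials joining a pair $J \subset K$, lie in the saturation), and the general cycle binomial is produced by multiplying Bayes binomials along the cycle and cancelling the resulting monomial factors, which is precisely what saturation at $\alpha_\scrE \beta_\scrE$ permits. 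Once this identification is firmly in place, the two assertions of the theorem follow from the graph-theoretic characterization of the Graver basis described above.
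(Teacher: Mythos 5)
Your overall route is the same as the paper's: reduce to the toric ideal of the vertex--edge incidence matrix of $G(\scrE)$, use the fact that for a bipartite graph the Graver basis consists of the cycle binomials and is a universal Gr\"obner basis (Proposition \ref{prop:bipartiteUnimodular}), and obtain generation by induced cycles via chord-splitting; those two parts of your argument match the paper's. The genuine gap is in the step you yourself flag as the main obstacle, the identification $I_\scrE = I_{\mcA_{G(\scrE)}}$, and it is more than bookkeeping. First, ``multiplying Bayes binomials along the cycle'' does not typecheck: consecutive conditioning sets $J_l, J_{l+1}$ along a cycle of $G(\scrE)$ need not be nested, and Bayes binomials exist only for nested pairs. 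The paper's argument instead multiplies $f_C$ by $\prod_l p_{i_l}$ and converts one monomial into the other in $k$ steps, each step an application of a Bayes binomial pairing $J_l$ with the \emph{full} set $[m]$ (i.e.\ involving the unconditional $p_{i}$'s as intermediaries); this is exactly why the hypothesis $[m]\in\scrE$ enters and why the multiplier is $\prod_i p_i$ rather than an arbitrary monomial.

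Second, the theorem is stated for arbitrary $\scrE$, and when $[m]\notin\scrE$ your sketch has nothing to chain with: for $\scrE=\{12,13,23\}$ there are no nested pairs at all, so $J_\scrE = I_{{\rm Bayes}(\scrE)} = 0$, yet the hexagon binomial $p_{1|12}p_{3|13}p_{2|23}-p_{2|12}p_{3|23}p_{1|13}$ must lie in $I_\scrE$. In this case $I_\scrE$ is by definition the elimination ideal $I_{\scrE'}\cap\C[\scrE]$ with $\scrE'=\scrE\cup\{[m]\}$, so after proving the special case one still has to argue that the cycle binomials of $G(\scrE)$ give a universal Gr\"obner basis of this intersection; the paper does so by noting that a universal Gr\"obner basis intersected with the subring remains one for the elimination ideal \cite[Prop.~4.13(c)]{GBCP}, and that the cycle binomials of $G(\scrE')$ avoiding the variables $p_{i|[m]}$ are precisely the cycle binomials of $G(\scrE)$. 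Finally, for the containment $I_\scrE\subseteq I_{\mcA_{G(\scrE)}}$ you list the check but do not carry it out; it is quick --- $J_\scrE\subseteq I_{{\rm Bayes}(\scrE)}\subseteq I_{\mcA_{G(\scrE)}}$ since every Bayes binomial is a $4$-cycle binomial, and because $I_{\mcA_{G(\scrE)}}$ is prime and contains no monomial, saturating at $\alpha_\scrE\beta_\scrE$ does not leave it --- but it needs to be said, as does the (routine) remark that $(I_{{\rm Bayes}(\scrE)}:\prod_i p_i^\infty)\subseteq I_\scrE$ because $I_\scrE$ is already saturated with respect to these variables.
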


In order to prove Theorem \ref{thm:CPuniversalGB}, we first need to recall some facts about unimodular toric ideals, of which $I_\scrE$ is an example.  
Unimodular matrices and unimodular toric ideals are defined and characterized as follows, following Sturmfels \cite{GBCP}.  
A {\em triangulation} of $\mcA$ is a collection $\frkF$ of subsets $\mcB$ of the columns of $\mcA$ such that $\{ \pos(\mcB) : \mcB \in \frkF \}$ is the set of cones in a simplicial fan with support $\pos(\mcA)$.  
A triangulation of $\mcA$ is {\em unimodular} if the normalized volume \cite{GBCP} is equal to one for all maximal simplices $\mcB$ in the triangulation
.  The matrix $\mcA$ is a {\em unimodular} matrix if all triangulations of $\mcA$ are unimodular.  We define a unimodular toric ideal in the following definition-proposition.

\begin{prop} \cite{GBCP} \label{prop:unimodularTFAE}
A toric ideal $I_\mcA$ is called {\em unimodular} \index{toric ideal!unimodular} if any of the following equivalent conditions hold.
\begin{itemize}
\item[(i)] Every reduced Gr\"obner basis of $I_\mcA$ consists of squarefree binomials,
\item[(ii)] $\mcA$ is a unimodular matrix, 
\item[(iii)] all the initial ideals of $I_\mcA$ are squarefree.
\end{itemize}
\end{prop}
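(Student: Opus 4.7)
The plan is to establish the three-way equivalence via the cyclic chain (ii)~$\Rightarrow$~(iii)~$\Rightarrow$~(i)~$\Rightarrow$~(ii), using as the organizing tool the Sturmfels correspondence between initial ideals of $I_\mcA$ and regular polyhedral subdivisions of the configuration $\mcA$ developed in Chapter~8 of \cite{GBCP}.

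For (ii)~$\Rightarrow$~(iii) I would appeal to the identification of $\sqrt{\initial_\omega(I_\mcA)}$ with the Stanley--Reisner ideal of the regular subdivision $\Delta_\omega$ induced by a weight $\omega$, combined with the fact that $\initial_\omega(I_\mcA)$ is already radical precisely when $\Delta_\omega$ is a unimodular triangulation. Under hypothesis (ii) every triangulation of $\mcA$, regular or not, is unimodular, so $\initial_\omega(I_\mcA)$ coincides with its radical and is therefore squarefree for every $\omega$.

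For (iii)~$\Rightarrow$~(i), fix a term order $\prec$; the reduced Gr\"obner basis $\mathcal{G}_\prec$ of the toric ideal $I_\mcA$ consists of binomials $x^u - x^v$ with $\gcd(x^u, x^v) = 1$ and $\mcA u = \mcA v$. Squarefreeness of $\initial_\prec(I_\mcA)$ immediately makes each leading monomial $x^u$ squarefree, so the remaining task is to show the trailing monomial $x^v$ is squarefree as well. My plan here is to produce a second term order $\prec'$, constructed from a weight vector concentrated on $\supp(v)$, under which $x^v$ is the unique $\prec'$-maximum in its $\mcA$-fiber; then $x^v$ appears as a leading monomial of some reduced Gr\"obner basis element for $\prec'$, and a second application of (iii) forces it to be squarefree.

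For (i)~$\Rightarrow$~(ii), I would use that the union of all reduced Gr\"obner bases is the universal Gr\"obner basis of $I_\mcA$, and that the set of circuit binomials of $\mcA$ (coming from minimal-support elements of $\ker_\Z \mcA$) is contained in this universal Gr\"obner basis. Hypothesis (i) makes every such circuit binomial squarefree, i.e.\ forces every circuit vector to have entries in $\{-1,0,1\}$; by the matroidal characterization of unimodularity (Chapter~4 of \cite{GBCP}) this is equivalent to $\mcA$ being a unimodular matrix.

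I expect the main obstacle to be the trailing-monomial step inside (iii)~$\Rightarrow$~(i): squarefreeness of the leading term does not propagate to the trailing term by any formal manipulation of the binomial itself, and one really has to exploit how adjacent full-dimensional chambers of the Gr\"obner fan reshuffle which monomial in a given $\mcA$-fiber is selected as the leader. The rest of the argument is a fairly mechanical assembly of the Sturmfels dictionary and the standard matroidal characterization of unimodular integer matrices.
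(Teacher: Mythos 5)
This proposition is quoted from Sturmfels \cite{GBCP}; the paper gives no proof of its own, so your argument has to stand against the standard one there. Your (ii)$\Rightarrow$(iii) (squarefree initial ideals correspond to unimodular regular triangulations) and your (i)$\Rightarrow$(ii) (circuits lie in the universal Gr\"obner basis, so (i) forces every circuit to have entries in $\{0,\pm 1\}$, and then every column lies in the lattice spanned by any basis, which is unimodularity) are essentially sound. The genuine gap is in (iii)$\Rightarrow$(i), at exactly the step you flag. Choosing $\prec'$ so that $x^v$ is the $\prec'$-maximum of its $\mcA$-fiber only shows $x^v \in \initial_{\prec'}(I_\mcA)$, and membership in a squarefree monomial ideal implies nothing about $x^v$ itself, since such an ideal contains every multiple of its generators. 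To conclude that $x^v$ is squarefree you would need it to be a \emph{minimal} generator of $\initial_{\prec'}(I_\mcA)$, i.e.\ every proper divisor of $x^v$ must be standard for $\prec'$; fiber-maximality does not give this, because a proper divisor of $x^v$ can be non-standard on account of its own, smaller, fiber. So the assertion that ``$x^v$ appears as a leading monomial of some reduced Gr\"obner basis element for $\prec'$'' is unjustified, and it is the crux of the equivalence.

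The standard escape is to reorder the cycle as (ii)$\Rightarrow$(i)$\Rightarrow$(iii)$\Rightarrow$(ii), so that the trailing terms are never handled via (iii). Indeed (i)$\Rightarrow$(iii) is immediate, since $\initial_\prec(I_\mcA)$ is generated by the leading monomials of the reduced Gr\"obner basis; (iii)$\Rightarrow$(ii) is your triangulation argument, supplemented by the fact that every basis of $\mcA$ occurs as a maximal cell of some regular triangulation; and (ii)$\Rightarrow$(i) is done with the Graver basis: every element of a reduced Gr\"obner basis is primitive, for unimodular $\mcA$ the Graver basis coincides with the set of circuits (Proposition 8.11 of \cite{GBCP}), and circuits of a unimodular matrix have entries in $\{0,\pm 1\}$ by Cramer's rule, so both monomials of every reduced Gr\"obner basis element are squarefree. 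This route avoids entirely the chamber-reshuffling difficulty you anticipated, which as stated your proposal does not overcome.
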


A special class of unimodular matrices are those coming from bipartite graphs \cite{AokiTakemura2002, SlavkovicSullivant2006}. 
Let $G=(U,V,E)$ be a bipartite graph. 
In our case, $G(\scrE)$ has 
\begin{equation} \label{eq:vertexsets}
U = \{ u_I : I \in \scrE \} \text{ and } V = \{ v_i : i \in \union_{I \in \scrE} I \}.
\end{equation}
Let $\mcA$ be the vertex-edge incidence matrix of $G$:  The rows of $\mcA$ are labeled $u_1, \dots, u_{|U|},$ $v_1, \dots, v_{|V|}$, the columns are labeled with the edges, and $a_{ij}$ is $1$ if vertex $i$ is in edge $j$ and zero otherwise.
For a cycle $C$ in the graph, the cycle binomial $f_C$ is defined (up to sign) as above.
Let $\pi_{\mcA}$ be the map $\R^{\|\scrE\|} \ra \R^{|U|+|V|}$ defined by applying $\mcA$.   
We say $u \in \ker (\pi_{\mcA})$ is a {\em circuit} if $\supp(u)$ is minimal with respect to inclusion in $\ker(\pi_\mcA)$ and the coordinates of $u$ are relatively prime \cite{GBCP}.  Equivalently, a circuit is an irreducible binomial $x^{u+} - x^{u-}$ of the toric ideal $I_\mcA$ with minimal support. The {\em Graver basis} of the ideal $I_{\mcA}$ consists of all circuits.
For $\mcA$ from a bipartite graph, the circuits of $\mcA$ are precisely the cycle binomials of  the graph \cite{Schrijver1998,SlavkovicSullivant2006}.  Additionally, a Graver basis is also a universal Gr\"obner basis in the case of unimodular toric varieties (Proposition 8.11 of \cite{GBCP}).  We summarize these results in the following proposition.  

\begin{prop} \label{prop:bipartiteUnimodular}
The vertex-edge incidence matrix $\mcA$ of a bipartite graph $G=(U,V,E)$ is unimodular, so $I_\mcA$ is a unimodular toric ideal.  The cycle binomials of $G$ are the circuits of $\mcA$, and therefore define the Graver basis of $I_{\mcA}$.  In particular, they give a universal Gr\"obner basis for $I_\mcA$.
\end{prop}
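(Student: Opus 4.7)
The plan is to verify the three assertions in the statement separately — unimodularity of $\mcA$, the identification of circuits with cycle binomials, and the passage from Graver basis to universal Gr\"obner basis — each by a short classical argument, and then combine them using Proposition \ref{prop:unimodularTFAE}.

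First I would establish that $\mcA$ is totally unimodular, i.e.\ every square submatrix has determinant in $\{-1,0,1\}$, by induction on the size $k$. The base case $k=1$ is immediate since entries are $0$ or $1$. For the inductive step, given a $k\times k$ submatrix $M$, I would split into three cases: if some column of $M$ is zero, then $\det(M)=0$; if some column has a unique nonzero entry, I expand along it and apply induction; otherwise every column of $M$ has exactly two $1$'s, and because rows of $\mcA$ are indexed by $U\sqcup V$ and each edge meets one vertex of each part, each such column contains exactly one $1$ in a $U$-row and one in a $V$-row, so multiplying the $V$-rows by $-1$ produces a matrix with every column summing to zero, hence singular. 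Total unimodularity implies that every nonzero maximal minor of $\mcA$ is $\pm 1$, so in particular all maximal minors have equal absolute value; this is exactly condition (ii) of Proposition \ref{prop:unimodularTFAE}, which therefore lets me conclude that $I_\mcA$ is a unimodular toric ideal.

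Second, I would identify the circuits of $\mcA$ with the cycle binomials of $G$. An integer vector $u \in \ker(\pi_\mcA)\cap \Z^E$ is an edge-weighting $E\to\Z$ whose values sum to zero at every vertex. Let $u$ be a circuit (minimal support, relatively prime entries). I would first rule out pendant edges in $\supp(u)$: a leaf of $\supp(u)$ would be a vertex incident to exactly one edge $e\in\supp(u)$, forcing $u_e=0$, a contradiction. Hence every vertex of $\supp(u)$ has degree at least $2$ in $\supp(u)$, so $\supp(u)$ contains a simple cycle $C$; since $G$ is bipartite, $C$ has even length and alternates between $U$ and $V$. Assigning alternating signs $\pm 1$ along $C$ yields a vector $v\in\ker\pi_\mcA$ with $\supp(v)\subseteq \supp(u)$, and choosing $\lambda\in\Q$ so that $u-\lambda v$ cancels at least one coordinate of $u$ on $C$ gives an element of $\ker\pi_\mcA$ with strictly smaller support, contradicting minimality unless $\supp(u)=C$ and $u$ is a scalar multiple of $v$. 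The relatively-prime condition then forces $u\in\{-1,0,1\}^E$, and the associated binomial $x^{u_+}-x^{u_-}$ is exactly the cycle binomial $f_C$ as defined in the paper.

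Finally, having identified the set of circuits of $\mcA$ with the set of cycle binomials, the Graver basis of $I_\mcA$ (which by definition is the set of circuits) consists precisely of the cycle binomials. Applying Proposition 8.11 of \cite{GBCP}, which asserts that for a unimodular toric ideal the Graver basis is itself a universal Gr\"obner basis, the proof is complete. The main obstacle is the middle step: the graph-theoretic argument that a minimal-support kernel vector must be supported on a single simple cycle with alternating $\pm 1$ weights, where one must carefully rule out both tree components (via the pendant-edge argument) and supports strictly containing a cycle (via the subtraction argument above), while also using bipartiteness to ensure evenness of cycles and the $\pm 1$ normalization.
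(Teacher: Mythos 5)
Your proof is correct, but it takes a different route from the paper: the paper does not prove Proposition \ref{prop:bipartiteUnimodular} at all, it assembles it from the literature (unimodularity of bipartite incidence matrices from \cite{AokiTakemura2002, SlavkovicSullivant2006}, the identification of circuits with cycle binomials from \cite{Schrijver1998, SlavkovicSullivant2006}, and ``Graver basis $=$ universal Gr\"obner basis in the unimodular case'' from Proposition 8.11 of \cite{GBCP}), whereas you supply the classical arguments yourself: the determinant induction establishing total unimodularity, and the minimal-support analysis of kernel vectors (no pendant edges, hence a cycle in the support, hence equality with an alternating $\pm 1$ cycle vector). What your route buys is a self-contained and elementary verification that does not lean on outside sources; what the paper's route buys is brevity and the exact form of the statements as they appear in the cited references. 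Two small points would tighten your write-up. First, you only prove that every circuit is a cycle binomial; the proposition asserts equality, and the converse (every cycle vector has minimal support) should be recorded --- it follows immediately from your own pendant-edge argument, since a nonzero kernel vector supported on a proper subset of a simple cycle's edge set would have a leaf. Second, be careful with ``maximal minors'': for a connected bipartite graph the incidence matrix has rank $|U|+|V|-1$, so all minors of size equal to the number of rows vanish, and the condition you invoke should be stated for minors of size equal to $\rank(\mcA)$ (equivalently, note that total unimodularity bounds \emph{all} square minors by $1$ in absolute value, which is what matches the paper's triangulation/normalized-volume definition of a unimodular matrix in Proposition \ref{prop:unimodularTFAE}(ii)). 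With those two clarifications your argument is a complete proof of the proposition.
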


Now we are able to prove our theorem.

\begin{proof}[Proof of Theorem \ref{thm:CPuniversalGB}]
Let $\mcA_{G(\scrE)}$ be the vertex-edge incidence matrix of $G(\scrE)$.  By Proposition \ref{prop:bipartiteUnimodular},  
its cycle binomials (circuits) give a universal Gr\"obner basis of $I_{\mcA_{G(\scrE)}}$.
In fact, the induced cycles are enough to generate this ideal \cite{AokiTakemura2002}.  Suppose $C$ is a cycle and $e$ a chord, and split $C$ into two cycles $C_1$ and $C_2$, both containing $e$ (but in opposite directions).  Associate cycle binomials $f_{C_1}$ and $f_{C_2}$, respectively. Then the $S$-polynomial (\S \ref{sec:toricvar}) with the $e$-containing terms leading is $f_C$.  However, this is no longer necessarily a Gr\"obner basis.
For example, let $\scrE = \{ \{12\}, \{23\}, \{123\}\}$ as in Figure \ref{fig:bipartite12--23-123}. 
\begin{figure}[htb]
\[ \SelectTips{cm}{10}
   \begin{xy}<1cm,0cm>:
(0,0) *+!{123}="123";
      p+(0,1) *+!{1}="1",
      p+(1,-.86) *+!{3}="3",
      p+(-1,-.86) *+!{2}="2";
(-1,.5) *+!{12}="12"; 
(0,-1.3) *+!{23}="23"; 
{\ar@{->}_{p_{1}} "123"; "1"};
{\ar@{->}^{p_{2}} "123"; "2"};
{\ar@{->}^{p_{3}} "123"; "3"};
{\ar@{->}^{p_{1|12}} "12"; "1"};
{\ar@{->}_{p_{2|12}} "12"; "2"};
{\ar@{->}^{p_{2|23}} "23"; "2"};
{\ar@{->}_{p_{3|23}} "23"; "3"};
   \end{xy}
\]
\caption{Bipartite graph for $\scrE = \{ \{1,2\},\{2,3\},\{1,2,3\}\}$.} \label{fig:bipartite12--23-123}
\end{figure}

The outer cycle $C=1 \ra 12 \ra 2 \ \ra 23 \ra 3 \ra 123 \ra 1$ gives the cycle binomial $f_C = p_{1|12}p_{2|23} p_{3|123} - p_{2|12} p_{3|23} p_{1|123}$.  The cycle $C$ has a chord $2-123$, and the binomial $f_C$ lies in the ideal of the two binomials 
\[
p_{1|12}p_{2|123} - p_{2|12}p_{1|123} \;\; \text{and} \;\; 
p_{2|23}p_{3|123} - p_{3|23}p_{2|123}
\]
after splitting along the chord. These are both the induced cycles of the graph.  However, for a term order (\S \ref{sec:toricvar}) prioritizing $p_{2|123}$ (e.g. lexicographic with $p_{2|123} \succ \cdots$), the leading term of $f_C$ cannot lie in the initial ideal $\langle p_{1|12}p_{2|123}, p_{3|23}p_{2|123} \rangle$ of the ideal generated by the chordal binomials. 

Next we show that the graph ideal and conditional probability ideal coincide, 
$I_{\mcA_{G(\scrE)}} = I_\scrE$.
For the containment $I_{\mcA_{G(\scrE)}} \supseteq I_\scrE$, first observe that $I_{{\rm Bayes}(\scrE)} \subseteq I_{\mcA_{G(\scrE)}}$.  
\begin{figure}[htb]
\[ \SelectTips{cm}{10}
   \begin{xy}<1.5cm,0cm>:
(0,0);
      p+(-1,0) *+!{i}="i",
      p+(1,0) *+!{j}="j";
(0,.5) *+!{J}="J"; 
(0,-.5) *+!{K}="K"; 
{\ar@{->}^{p_{i|K}} "K"; "i"};
{\ar@{->}_{p_{i|J}} "J"; "i"};
{\ar@{->}_{p_{j|K}} "K"; "j"};
{\ar@{->}^{p_{j|J}} "J"; "j"};
   \end{xy}
\]
\caption{Subgraph of $G(\scrE)$ giving a Bayes binomial.} \label{fig:BayesSubgraph}
\end{figure}
This is because if $J,K \in \scrE$ with $i,j \in J \subseteq K$, we have the subgraph in Figure \ref{fig:BayesSubgraph}, which is a cycle with associated cycle binomial $\pjJ \piK - \piJ \pjK  $.
Together with Proposition \ref{prop:BayesSat}, we now have 
\[
J_\scrE \subseteq I_{{\rm Bayes}(\scrE)} \subseteq I_{\mcA_{G(\scrE)}}
\]
so, since saturation is inclusion-preserving and $ I_{\mcA_{G(\scrE)}}$ is prime, 
\[
I_\scrE = (J_\scrE :(\alpha_{\scrE} \beta_{\scrE})^{\infty} ) 
        \subseteq  (I_{\mcA_{G(\scrE)}} : (\alpha_{\scrE} \beta_{\scrE})^{\infty} ) 
        = I_{\mcA_{G(\scrE)}}.
\]

Now we show the reverse inclusion $I_{\mcA_{G(\scrE)}} \subseteq I_\scrE$.  Again by Proposition \ref{prop:BayesSat}, we have
\[
 I_{{\rm Bayes}(\scrE)} \subseteq I_\scrE .
\]
Now assume that $[m] \in \scrE$, so that $p_1, \dots, p_m \in \C[\scrE]$.  
We claim that in fact $I_{\mcA_{G(\scrE)}} \subseteq (I_{{\rm Bayes}(\scrE)}: \prod_{i=1}^{m} p_{i} ) $, from which the result will follow.  Let $C$ be an induced cycle of $G(\scrE)$, and $f_C$ its cycle binomial.  We must show that this cycle binomial can be obtained from the Bayes binomials, up to multiplication by $ \prod_{i=1}^{m} p_{i}$.  Let $C$ be the cycle $$i_1 \leftarrow J_1 \ra i_2 \leftarrow J_2 \rightarrow \cdots \rightarrow i_k \leftarrow J_k \rightarrow i_1.$$  With this notation we have $i_1, i_2 \in J_1$, $i_2, i_3 \in J_2$, $\dots$, $i_1, i_k \in J_k$.  Then
\[
f_C  = p_{i_2 | J_1} p_{i_3 | J_2} \cdots p_{i_k | J_{k-1}} p_{i_1 | J_k}
       -  p_{i_1 | J_1} p_{i_2 | J_2} \cdots p_{i_k | J_k} .
\] 
We show the first monomial of $( \prod_{i=1}^{k} p_{i} ) f_C$ is equal to the second mod $ I_{{\rm Bayes}(\scrE)}$.  Pair off as follows:
\begin{eqnarray*}
\phantom{=} &  (\underline{p_{i_1}} p_{i_2} p_{i_3}  \cdots p_{i_k}) \underline{p_{i_2 | J_1}} p_{i_3 | J_2} p_{i_4 | J_3} \cdots p_{i_k | J_{k-1}} p_{i_1 | J_k} & \text{Step 1} \\
 = & (p_{i_2} \underline{p_{i_2}}  p_{i_3} \cdots p_{i_k}) p_{i_1 | J_1} \underline{p_{i_3 | J_2}}  p_{i_4 | J_3} \cdots p_{i_k | J_{k-1}} p_{i_1 | J_k} & \text{Step 2}\\
 = & (p_{i_2}  p_{i_3}  \underline{p_{i_3}} \cdots p_{i_k}) p_{i_1 | J_1} p_{i_2 | J_2} \underline{ p_{i_4 | J_3}} \cdots p_{i_k | J_{k-1}} p_{i_1 | J_k} & \text{Step 3} \\
& \vdots & 
\end{eqnarray*}
where the equalities hold mod  $ I_{{\rm Bayes}(\scrE)}$.  Continuing in this fashion, at step $k-1$ we have
\begin{eqnarray*}
 = & (p_{i_2}  p_{i_3} \cdots  p_{i_{k-1}} \underline{p_{i_{k-1}}} p_{i_k}) p_{i_1 | J_1} p_{i_2 | J_2} \cdots p_{i_{k-2} | J_{k-2}} \underline{p_{i_k | J_{k-1}}} p_{i_1 | J_{k}} & \text{Step $k-1$} \\
 = & (p_{i_2}  p_{i_3} \cdots  p_{i_{k-1}} p_{i_{k}} \underline{p_{i_k}}) p_{i_1 | J_1} p_{i_2 | J_2} \cdots p_{i_{k-2} | J_{k-2}} p_{i_{k-1} | J_{k-1}} \underline{p_{i_1 | J_{k}}} & \text{Step $k$} \\
 = & (p_{i_2}  p_{i_3} \cdots  p_{i_{k-1}} p_{i_{k}} p_{i_1}) p_{i_1 | J_1} p_{i_2 | J_2} \cdots p_{i_{k-2} | J_{k-2}} p_{i_{k-1} | J_{k-1}} p_{i_k | J_{k}} & \text{Step $k+1$} \\
\end{eqnarray*}
as desired.  In terms of $G(\scrE)$, this amounts to breaking up a long cycle into 4-cycles passing through $[m]$, and erasing the overlaps among these cycles.   Thus since the induced cycles generate $I_{\mcA_{G(\scrE)}}$, we have 
\[
I_{\mcA_{G(\scrE)}} \subseteq ( I_{{\rm Bayes}(\scrE)} : \prod_{i=1}^m p_i)) \subseteq I_\scrE
\]
This proves the result in the special case $[m] \in \scrE$.  In the general case, 
suppose we have some $\scrE$ not containing $[m]$, enabling us to obtain relations among 'pure' conditional probabilities (i.e. excluding $p_1, \dots, p_m$).  Let $\scrE' = \scrE \union [m]$ and apply the special case of the Theorem.  Then by \cite[Proposition 4.13(c)]{GBCP}, since we have a universal Gr\"obner basis, we just intersect it with the smaller coordinate ring to obtain a universal Gr\"obner basis of the smaller ring.  This corresponds here to removing the set $[m]$ from $\scrE$ and taking the cycle binomials as our new Gr\"obner basis.  
\end{proof}

\section{Conditional probability and the moment map} \label{sec:CPmoment}
In this section we show how to recover and generalize some results of Mat\'u\v{s} \cite{Matus2003Conditional} using toric geometry.  
The main result we will expand upon maps the space of conditional probability distributions (Definition \ref{def:CPdist}) for all possible conditioned events $\scrE = \{ I \subset [m]: |I| \geq 2\}$ onto the permutohedron by first projecting down to events of size 2, $\scrE=\{I \subset [m] : |I|=2\}$.

\begin{thm}[Mat\'u\v{s} \cite{Matus2003Conditional}] \label{thm:Matusthm}
For $\scrE = \{ I \subset [m] : |I| \geq 2 \}$ and $\mathbf{p}$ a conditional probability distribution (Definition \ref{def:CPdist}), the map $W:\R^{\|\scrE\|} \ra \R^m$, given by 
$$
W_i(\mathbf{p})  =  \sum_{j \in [m] \setminus i} p_{i|ij},
$$
restricts to a homeomorphism of the space of conditional probabilities onto the $m-1$ dimensional permutohedron \index{permutohedron} $\mathbf{P}_{m-1}$.
\end{thm}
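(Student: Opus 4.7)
The plan is to exhibit $W$ as a continuous bijection from the (compact) space of CP distributions onto the (Hausdorff) permutohedron $\mathbf{P}_{m-1}$; such a map is automatically a homeomorphism.

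First, I would verify that the image lies in $\mathbf{P}_{m-1}$ using the facet description of the permutohedron (taken as the convex hull of the permutations of $(0,1,\ldots,m-1)$): it is the locus in $\{x \in \R^m : \sum_i x_i = \binom{m}{2}\}$ cut out by the inequalities $\sum_{i \in S} x_i \geq \binom{|S|}{2}$ for each nonempty $S \subsetneq [m]$. Applying axiom (i) of Definition \ref{def:CPdist} to each pair $\{i,j\} \subseteq S$ gives $p_{i|ij} + p_{j|ij} = 1$, so for any such $S$,
\[
\sum_{i \in S} W_i(\mathbf{p}) \;=\; \binom{|S|}{2} \;+\; \sum_{\substack{i \in S \\ j \notin S}} p_{i|ij} \;\geq\; \binom{|S|}{2},
\]
with equality at $S = [m]$. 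Hence $W(\mathbf{p}) \in \mathbf{P}_{m-1}$.

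For the bijection I would exploit the Minkowski decomposition $\mathbf{P}_{m-1} = \sum_{i<j}[e_i,e_j]$. Under this, $W(\mathbf{p}) = \sum_{i<j}(p_{i|ij}\, e_i + p_{j|ij}\, e_j)$ is a sum of moment maps on the individual $\PP^1$'s, one per pair. On the open stratum where all $p_{i|[m]} > 0$, axiom (ii) of Definition \ref{def:CPdist} with $K=[m]$ forces $p_{i|I} = p_i/\sum_{j \in I} p_j$, so the whole conditional probability distribution is determined by $p \in \Delta^\circ_{m-1}$ and the composite
\[
\Delta^\circ_{m-1} \;\longrightarrow\; \mathbf{P}^\circ_{m-1}, \qquad p \;\longmapsto\; \Bigl(\sum_{j \neq i} \tfrac{p_i}{p_i+p_j}\Bigr)_i,
\]
is the toric moment map of the permutohedral variety; by Theorem \ref{thm:MomentMap} this is a diffeomorphism onto the relative interior. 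Injectivity is also transparent pointwise: the ratios $p_{i|ij}/p_{j|ij} = p_i/p_j$ are recoverable from each summand of the Minkowski decomposition of $W(\mathbf{p})$, and they determine $p$ up to scaling.

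Finally, I would extend to the boundary by matching the two stratifications. Faces of $\mathbf{P}_{m-1}$ are indexed by ordered set partitions $(B_1,\ldots,B_k)$ of $[m]$ and each such face is itself a product of smaller permutohedra $\mathbf{P}_{|B_1|-1} \times \cdots \times \mathbf{P}_{|B_k|-1}$ embedded via the block structure. On the CP-distribution side, the ``versions of conditional probability'' freedom permitted by Definition \ref{def:CPdist} at a point where some $\pJJ = 0$ is exactly the blowup of $\Delta_{m-1}$ along the corresponding coordinate stratum, producing an independent factor CP distribution on each block of the partition. Induction on $m$ then matches the strata: on the face indexed by $(B_1,\ldots,B_k)$, $W$ splits as the product of smaller Matus maps on the $B_r$. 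The main obstacle is precisely this boundary blowup: one must verify that each exceptional fiber is mapped injectively onto the relative interior of the appropriate permutohedral face and that the strata-by-strata inverses glue continuously across the closure. With the Minkowski-decomposition inductive match in hand, the compactness-to-Hausdorff principle completes the proof.
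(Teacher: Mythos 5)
Your facet computation correctly places the image inside $\mathbf{P}_{m-1}$, and the compact-to-Hausdorff principle is legitimately available (continuity of $W$ is trivial, and the CP space is a closed subset of a product of simplices). The genuine gap is in bijectivity. The remark that injectivity is ``transparent pointwise'' because the ratios $p_{i|ij}/p_{j|ij}$ are ``recoverable from each summand of the Minkowski decomposition of $W(\mathbf{p})$'' is circular: a point $b$ of a zonotope does not determine its decomposition into summands, since the fiber $P_{\mcA}(b)=\{\bfp:\mcA\bfp=b\}\cap\prod_{I}\Delta_I$ is in general a positive-dimensional polytope — this is exactly the ambiguity depicted in Figure \ref{fig:MinkSumAmbig} — so from $W(\mathbf{p})$ alone you cannot read off the $p_{i|ij}$. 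What would rescue the interior case is your other assertion, that $p\mapsto\bigl(\sum_{j\neq i}p_i/(p_i+p_j)\bigr)_i$ is the moment map of the permutohedral variety; but this map is not literally the map of Theorem \ref{thm:MomentMap} (which has a single global normalization $1/Z(y)$) — it is a sum of separately normalized per-pair moment maps. The identification ``sum of the factor moment maps $=$ moment map of the Minkowski-sum (Segre-type) embedding,'' and in particular the statement that exactly one point of each fiber $P_{\mcA}(b)$ satisfies the toric (Bayes) relations, is precisely the nontrivial content here; it is what the paper establishes in Theorem \ref{thm:MultiMomentMap} by showing the maximum-entropy point of the fiber is the unique nonnegative point of $\mcZ_\mcA$ over $b$. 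As written, your interior argument rests on this unproven identification, which is the theorem's real content rather than a citation-level fact.

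On the boundary you explicitly defer the decisive step (``one must verify that each exceptional fiber is mapped injectively \dots and that the strata-by-strata inverses glue continuously''). With a continuous bijection from a compact space you do not need continuity of the inverse, but you do need bijectivity on every stratum, and that is exactly what is deferred: one must show that the ``versions'' freedom on a stratum where some $\pJK=0$ really produces a product of smaller CP spaces, that $W$ restricted there is again of Matúš type, and (a point your sketch skips entirely) that the pair coordinates determine all higher conditionals $p_{i|I}$, $|I|\geq 3$, on every stratum — clear in the interior, where everything is a function of $p$, but requiring the same blockwise induction at the boundary, since $\scrE$ here contains all $I$ with $|I|\geq 2$ while $W$ sees only pairs. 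Your architecture (facet check, interior moment map, stratification induction over faces of $\mathbf{P}_{m-1}$ indexed by ordered set partitions) is essentially Matúš's original route and is a genuinely different organization from the paper, which instead recovers the theorem as the $\scrE=\{I:|I|=2\}$ specialization of Theorem \ref{thm:MultiMomentMap}, whose maximum-entropy argument treats interior and boundary uniformly and resolves the Minkowski-fiber ambiguity in one stroke; but the two load-bearing steps — uniqueness within the fiber and the boundary strata — are asserted rather than proved in your proposal.
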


Note that the linear map $W$ is the restriction of $\mcA = \mcA_{G(\scrE)}$ to the rows labeled by the vertex set $V$ in $G$ \eqref{eq:vertexsets} and to the columns labeled by two-event conditional probabilities (edges in $G(\scrE)$) $p_{i|ij}$.  In fact $\mcA$, will in general define a map from the space of projective conditional probability distributions onto a generalized permutohedron $\Delta_{\scrE}$ defined below.

First consider the multiprojective toric  variety $\mcZ_\mcA$ cut out of $\prod_{I \in \scrE} \PP^{|I|-1}$ by the equations of Theorem \ref{thm:CPuniversalGB}, i.e.~the space of projective conditional probability distributions.  
In Section \ref{sec:toricvar} we recall the definition of the affine toric variety $X_\mcA$ associated to an integer matrix $\mcA$, and the projective  toric variety $Y_\mcA$ associated to a $\Z$-graded matrix $\mcA$ (that is, a matrix $\mcA$ such that $(1,1, \dots, 1)$ lies in its rowspan).  Given a matrix $\mcA = \mcA_{G(\scrE)}$, the space of $\scrE$-projective conditional probability distributions $\mcZ_\mcA$ is the closure of the image of the map $\mathbf{f}_\mcA:\theta \mapsto \theta^\mcA$, viewed as an element of $\prod_{I \in \scrE} \PP^{|I|-1}$.  Equipping this product space with multihomogeneous coordinates $((p_{i_1 | I}: \cdots : p_{i_{|I|} |I}), I \in \scrE)$, the variety $\mcZ_\mcA$ is cut out by the (multihomogeneous) toric ideal $I_\mcA$.
Suppose that we have $\union_{I \in \scrE} =[m]$.  Then because we view the points $((p_{i_1 | I}: \cdots : p_{i_{|I|} |I}), I \in \scrE)$ as elements of $\prod_{I \in \scrE} \PP^{|I|-1}$, the dimension of this variety is $m-1$ as expected, though the rank of $\mcA$ is larger. 

We now develop a version of the moment map of toric geometry applicable to the variety of projective conditional probability distributions.
Hereafter we index the columns of $\mcA$ by the conditional probability they represent, i.e. $\mcA = ( a_{\cdot i|I} : i \in  I \in \scrE )$. 
We will require a multigraded notion to play the role of the convex hull $\conv(\mcA)$ in the moment map.  We define
\[
\mconv(\mcA) = \{\sum_{I \in \scrE} \sum_{j \in I} \lambda_{j|I} a_{ \cdot j|I} : \lambda_{j|I} \in \R_{\geq 0}, \; \sum_{j \in I} \lambda_{j|I} = 1  \}.
\]
A function $\, w : 2^{[n]} \rightarrow \R \, $ is called {\em submodular} if
$\,w(I) + w(J)\, \geq\, w(I \cap J) +  w(I \cup J)\,$ for $I,J \subseteq [n]$. 
Each subset $I$ of $[m]$ defines a submodular function $w_I$ on $2^{[n]}$ 
by setting $w_I (J) = 1$ if $I \cap J $ is non-empty
and  $w_I(J) = 0$ if $I \cap J $ is empty for $J \in 2^{[n]}$.
The function $w$ defines a convex polytope $Q_w$ of dimension $\leq n-1$
as follows:
\begin{eqnarray*}  Q_w \,\,\, :=  &
\bigl\{ \, x \in \R^n \,: \,
x_1 + x_2 + \cdots + x_n = w([n]) \\
& \text{\  \,and } \sum\nolimits_{i \in I} x_i \leq w(J)\,\,
\hbox{for all} \,\, \emptyset\neq J \subseteq [n]   \,\bigr\}
\end{eqnarray*}
Thus the polytope corresponding to a subset $I$ is the simplex
$\Delta_I = {\rm conv} \{ e_k :  k \in I \}$.
Now consider an arbitrary subset  $\,\scrE = \{I_1,I_2,\ldots,I_r \}\,$
of $2^{[m]}$. It defines the submodular function
$\,w_{\scrE} =  w_{I_1} + w_{I_2} + \cdots + w_{I_r}$.
 The corresponding polytope $Q_{w_\scrE}$ is now the Minkowski sum \cite{Ziegler1995}
\begin{equation} \label{eq:MSS}
 \Delta_\scrE \quad = \quad \Delta_{I_1} + \Delta_{I_2} + \cdots + \Delta_{I_r}. 
\end{equation}

\begin{prop}
The projection of $\mconv(\mcA_{G(\scrE)})$ to the $V$-coordinates  \eqref{eq:vertexsets} is $\Delta_\scrE$.
\end{prop}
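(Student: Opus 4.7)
The plan is to unravel the definitions and observe that the projection of $\mconv(\mcA_{G(\scrE)})$ onto the $V$-coordinates is, column by column, exactly the Minkowski sum that defines $\Delta_\scrE$ in \eqref{eq:MSS}.

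First, I would write down the columns of $\mcA_{G(\scrE)}$ explicitly. Since $\mcA_{G(\scrE)}$ is the vertex-edge incidence matrix of a bipartite graph with parts $U$ and $V$ as in \eqref{eq:vertexsets}, the column indexed by the edge $p_{i|I}$ (with $i \in I \in \scrE$) has exactly two nonzero entries, both equal to $1$: one in row $u_I$ and one in row $v_i$. That is,
\[
a_{\cdot\, i|I} \;=\; e_{u_I} + e_{v_i}.
\]

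Next, I would substitute this into the definition of $\mconv(\mcA_{G(\scrE)})$ and group the terms by $I \in \scrE$. Using the block constraint $\sum_{j \in I}\lambda_{j|I} = 1$ to pull the $U$-standard basis vectors out of each block,
\[
\sum_{I \in \scrE}\sum_{j \in I}\lambda_{j|I}\,a_{\cdot\, j|I}
\;=\;\sum_{I \in \scrE} e_{u_I} \;+\; \sum_{I \in \scrE}\sum_{j \in I}\lambda_{j|I}\, e_{v_j}.
\]
The first summand is a fixed vector supported entirely on the $U$-coordinates, and the second summand is supported entirely on the $V$-coordinates.

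Finally, projecting to the $V$-coordinates kills the $U$-summand $\sum_I e_{u_I}$ and leaves the set of points of the form $\sum_{I \in \scrE}\sum_{j \in I}\lambda_{j|I}\, e_{v_j}$, where the $\lambda_{\cdot|I}$ range independently over the probability simplex on $I$ for each $I \in \scrE$. By definition, this is the Minkowski sum of the simplices $\Delta_I = \conv\{e_{v_j} : j \in I\}$ for $I \in \scrE$, which equals $\Delta_\scrE$ by \eqref{eq:MSS}. Both inclusions (image in $\Delta_\scrE$; every element of $\Delta_\scrE$ lifts to $\mconv$) are immediate from this expression because the choice of $\lambda$'s in each $I$-block is unconstrained across blocks.

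There is no substantive obstacle; the only care needed is bookkeeping to make clear that the block decomposition of the columns of $\mcA_{G(\scrE)}$ by $I \in \scrE$ is exactly what matches the independent convex combinations allowed in $\mconv$, so that the $V$-projection factors as an $I$-by-$I$ Minkowski sum rather than a smaller convex hull.
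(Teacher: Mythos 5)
Your proof is correct and follows essentially the same route as the paper's own (one-sentence) argument: the paper observes that $\mconv(\mcA_{G(\scrE)})$ is the Minkowski sum of the simplices $\conv(\mcA_I)$, each translated to have all $U$-coordinates equal to $1$, so the $V$-projection is $\Delta_{I_1}+\cdots+\Delta_{I_r}=\Delta_\scrE$. You simply make this explicit by writing each column as $a_{\cdot\, i|I}=e_{u_I}+e_{v_i}$ and splitting the convex combination block by block, which is a welcome expansion of the same idea.
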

\begin{proof}
The $\mconv$ construction is equivalent to translating each simplex that is the convex hull of each set of vectors $\mcA_I \subset \mcA$ by setting its $U$-coordinates  \eqref{eq:vertexsets} all to $1$, then taking the Minkowski sum. 
\end{proof}
Next is a version of Theorem \ref{thm:MomentMap} for varieties $\mcZ_\mcA$.  Note that $|V|=m$ when $\union_{I \in \scrE} I = [m]$.  Now we have a separate partition function for each conditioned-upon set.

\begin{thm} \label{thm:MultiMomentMap}
For $\mcA=\mcA_{G(\scrE)}$, the map $\nu: \mcZ_\mcA \ra \R^{|V|}$ defined by 
\[
\nu(z) = \sum_{I \in \scrE} \frac{1}{Z_I(z)} \sum_{i \in I} |z_{i|I}| a_{\cdot i|I},
\]
where $Z_I = \sum_{i \in I} |z_{i|I}|$, 
maps $\mcZ_\mcA$ onto $\mconv(\mcA)$, and is a bijection on $\mcZ_{\mcA, \geq 0}$.
\end{thm}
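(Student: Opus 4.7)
My plan is to treat $\nu$ as a multi-block analog of the classical moment map of toric geometry (Theorem~\ref{thm:MomentMap}): on each projective factor $\PP^{|I|-1}$ the standard moment map sends $z$ to $\sum_{i\in I}(|z_{i|I}|/Z_I)\,e_i$, and $\nu$ is the sum of these block-wise maps, twisted so that the $i$th term in block $I$ contributes the column $a_{\cdot i|I}=e_{u_I}+e_{v_i}$ in place of just $e_i$. Well-definedness on $\prod_{I\in\scrE}\PP^{|I|-1}$ is immediate from the scale-invariance of $|z_{i|I}|/Z_I(z)$ in each factor, and the containment $\nu(\mcZ_\mcA)\subseteq\mconv(\mcA)$ is automatic: the weights $\lambda_{i|I}:=|z_{i|I}|/Z_I(z)$ are nonnegative and sum to $1$ over $i\in I$, exactly matching the definition of $\mconv$.

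Next I would parameterize $\mcZ_{\mcA,>0}$ and use a convex-potential argument for bijectivity. Since $\mcA_{G(\scrE)}$ is the vertex-edge incidence matrix of a bipartite graph, the toric parametrization factors as $(\theta_I,\eta_i)\mapsto\theta_I\eta_i$; absorbing the $\theta_I$ into projective rescaling of each $I$-factor leaves $\eta\in\R^V_{>0}$ modulo a single global scaling. Writing $\eta_i=e^{\xi_i}$, the $V$-component of $\nu$ becomes the gradient of
\[
F(\xi)\;=\;\sum_{I\in\scrE}\log\sum_{i\in I}e^{\xi_i},
\]
since $\partial F/\partial\xi_j=\sum_{I\ni j}e^{\xi_j}/\sum_{i\in I}e^{\xi_i}$. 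The function $F$ is convex as a sum of log-sum-exponentials, and strictly convex on $\R^V/\R\mathbf{1}$ whenever the hypergraph $\scrE$ is connected on $V$---automatic when $[m]\in\scrE$, and reducible to that case otherwise by passing to $\scrE'=\scrE\cup\{[m]\}$ as in the proof of Theorem~\ref{thm:CPuniversalGB}---because the kernel of each block Hessian is the $\mathbf{1}_I$-direction together with the orthogonal complement, and these intersect in $\R\mathbf{1}$ under connectedness. By the standard Legendre correspondence of exponential-family theory, $\nabla F$ is then a diffeomorphism from $\R^V/\R\mathbf{1}$ onto the relative interior of the convex hull of its sufficient statistics---here the relative interior of $\mconv(\mcA)$, because the $U$-components of every $\nu(z)$ are identically $1$.

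The main obstacle I expect is pinning down that the image of $\nabla F$ is exactly $\mconv(\mcA)$: $\mcA_{G(\scrE)}$ is not of full rank, so the single-block Legendre statement must be adapted to respect the block structure, and some care is required at the boundary. I would finish by extending from the interior to the boundary via compactness---$\mcZ_{\mcA,\geq 0}$ is closed in a product of simplices, $\nu$ is continuous, and it extends the interior diffeomorphism, yielding a continuous bijection $\mcZ_{\mcA,\geq 0}\to\mconv(\mcA)$. Surjectivity on the full complex variety $\mcZ_\mcA$ then follows from the identity $\nu(z)=\nu(|z|)$ together with the fact that coordinatewise absolute values preserve the binomial relations generating $I_\mcA$ (since $|z|^u=|z^u|$ for any monomial), so $|z|\in\mcZ_{\mcA,\geq 0}$ for every $z\in\mcZ_\mcA$.
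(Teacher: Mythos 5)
Your interior argument is correct and takes a genuinely different route from the paper's. You work on the torus parametrization: writing $p_{i|I}=\theta_{u_I}\eta_i$, absorbing $\theta_{u_I}$ into the projective scaling of each factor, and identifying the $V$-part of $\nu$ with $\nabla F$ for $F(\xi)=\sum_{I\in\scrE}\log\sum_{i\in I}e^{\xi_i}$, so that bijectivity on the open stratum becomes Legendre duality for a product-multinomial exponential family (using that the $U$-coordinates are identically $1$ on both the image of $\nu$ and on $\mconv(\mcA)$, which lets you pass to $\Delta_\scrE$). The paper instead argues fiberwise: for $b\in\mconv(\mcA)$ it minimizes the divergence $D(\bfp)$ to the uniform point over the fiber polytope $P_\mcA(b)=\{\bfp:\mcA\bfp=b\}\cap\prod_{I\in\scrE}\Delta_I$ and shows via first-order conditions, with a case analysis when coordinates vanish, that the unique minimizer satisfies $p^{u^+}=p^{u^-}$ for all $u\in\ker\mcA$, hence lies in $\mcZ_{\mcA,\geq 0}$. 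Your closing observation that $\nu(z)=\nu(|z|)$ with $|z|\in\mcZ_{\mcA,\geq 0}$ (absolute values preserve binomial relations) is fine and matches what the paper leaves implicit.

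The genuine gap is bijectivity on the boundary. Compactness of $\mcZ_{\mcA,\geq 0}$ and continuity of $\nu$ only give that the image is compact and contains the closure of the relative interior of $\mconv(\mcA)$, i.e.\ surjectivity; a continuous extension of an interior diffeomorphism can perfectly well identify boundary points, so injectivity on $\mcZ_{\mcA,\geq 0}\setminus\mcZ_{\mcA,>0}$ does not follow from ``compactness plus it extends the interior map.'' This is exactly the spot you flagged and then waved away. To close it you need either a stratification argument --- every proper face of $\mconv(\mcA)$ is a Minkowski sum of faces of the $\Delta_I$, hence $\mconv$ of a column submatrix with at least one column per block, its preimage in $\mcZ_{\mcA,\geq 0}$ is the corresponding coordinate subvariety, and the Legendre argument is rerun on each stratum --- or the paper's device: inside a fiber $P_\mcA(b)$ the binomial equations are precisely the critical-point conditions for the strictly convex $D$, so each fiber meets $\mcZ_{\mcA,\geq 0}$ in exactly one point; the paper's boundary case analysis (zeros on both sides of $\supp(u)$, the case $\pjJ=1$, and the derivative tending to $-\infty$ otherwise) is what your appeal to compactness skips. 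A secondary flaw: the disconnected case cannot be ``reduced to $\scrE'=\scrE\cup\{[m]\}$'' as in Theorem \ref{thm:CPuniversalGB}; adding $[m]$ changes both $\mcZ_\mcA$ and $\mconv(\mcA)$, and no elimination step recovers the statement for $\scrE$. The correct fix is to observe that a disconnected $G(\scrE)$ splits both the variety and the Minkowski sum into products over connected components, so you run your argument componentwise, i.e.\ modulo one scaling per component rather than modulo a single $\R\mathbf{1}$.
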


\begin{proof}
The map $\nu$ is the composition of two maps.  The first map, $\nu_1: \mcZ_\mcA \ra \prod_{I \in \scrE} \Delta_I$, is a product of maps $\mu_{1}$ corresponding to each submatrix $\mcA_I$ as in the proof of Theorem \ref{thm:MomentMap}.  It ssends a point $((z_{i_1|I}, \dots, z_{i_{|I|}|I} ), I \in \scrE) \in \mcZ_\mcA$ to the point $\bfp=(p_{i|I} = \frac{1}{Z_I(z)} |z_{i|I}| : i \in I \in \scrE)$ in the product of simplices $\prod_{I \in \scrE} \Delta_I$, which can be thought of as possibly redundant barycentric coordinates.  The second map, $\nu_2$, corresponds to the Minkowski sum, with $\nu_2: \prod_{I \in \scrE} \Delta_I \ra \mconv(\mcA)$ sending $\bfp$ to $\mcA \bfp$.  
Whereas in the simplex case (and for a single $\mcA_I$) in Theorem \ref{thm:MomentMap}, $\mu_1$ and $\mu_2$ are identities, here there is additional ambiguity introduced by the Minkowski sum.  In particular, let $b \in \Delta_\scrE$ (\ref{eq:MSS}).  Then the preimage of $b$ in $ \prod_{I \in \scrE} \Delta_I$ is 
\[
P_\mcA(b) = \{\bfp: \mcA \bfp = b  \} \cap \prod_{I \in \scrE} \Delta_I,
\]
and in general consists of a polytope.  This is illustrated in Figure \ref{fig:MinkSumAmbig}, where the polytope $P_\mcA (b)$ is the set of pairs of points in the first and second simplex that add to $b$.
Analogously to the one-factor case (Theorem \ref{thm:MomentMap}), we will choose among the points of this fiber by selecting the maximum entropy point (or the point closest in the KL-divergence sense to the point representing a uniform distribution in all simplices).  The resulting space of solutions (the space of conditional probability distributions) is illustrated in Figure \ref{fig:TobleroneBlowup}.

Setting $D(\bfp) = D(\bfp ||\bfp^{uniform})$ so \[
D(\bfp) = \sum_{i \in I \in \scrE} p_{i|I} \log \piI - \sum_{i  \in I \in \scrE} \piI \log (\frac{1}{|I|}), 
\]
the Hessian of $D$ is $\frac{1}{p_{i|I}}$ on the diagonal and zero elsewhere.  Thus it is positive definite on the interior of $\prod_{I \in \scrE} \Delta_I$, and on points of the relative interior after restricting to nonzero coordinates.  Thus $D$ has a unique minimum $\bfp^*$ on $\prod_{I \in \scrE} \Delta_I$.  Were there another minimum, the (possibly restricted) Hessian would be positive definite on the open segment connecting it with $\bfp^*$.  We now argue that $\bfp^* \in \mcZ_\mcA$.

First suppose $\bfp^* \in (\prod_{I \in \scrE} \Delta_I)^\circ$, so that $0 < \piI <1$ in all coordinates, and let $u \in \ker \mcA$.  We must show that $p^{u^+} = p^{u^-}$.  For small $t$, $\bfp^* + tu \in \prod_{I \in \scrE} \Delta_I$ and \[
D(\bfp^* + tu) = \sum_{i \in I \in \scrE} (\piI + tu_{i|I}) \log  (\piI + tu_{i|I}) - \sum_{i \in I \in \scrE} (\piI + tu_{i|I}) \log \frac{1}{|I|}
\]\[
\frac{dD}{dt} = \sum_{i \in I \in \scrE}  u_{i|I} \log (\piI + tu_{i|I}) + \sum_{i \in I \in \scrE} u_{i|I} + \sum_{i \in I \in \scrE} u_{i|I} \log\frac{1}{|I|}.
\]
Since $\mcA$ is $\scrE$-multigraded, the last two terms of $\frac{dD}{dt}$ are zero (i.e. $(1,1, \dots, 1) \in \R^{\|\scrE\|}$ is in the rowspace of $\mcA$, and $(1,1,\dots, 1) \in \R^{|I|}$ is in the rowspace of each $\mcA_I$).  At $t=0$, the first order condition implies that 
\[
0 = \frac{dD}{dt} = \sum_{i \in 
I \in \scrE} u_{i|I} \log \piI.
\]
Grouping the sum by the sign of $u_{i|I}$ and changing to exponential notation, \begin{equation} \label{eq:mgmmbin}
p^{u^+} = p^{u^-}
\end{equation}
as desired.

Now suppose that $\bfp^*$ lies on the boundary of $\prod_{I \in \scrE} \Delta_I$. 
If the zeros of $\bfp$ lie outside $\supp(u)$, the argument made above for $\bfp^*$ in the interior holds after extending $D$ with the limit $p \log(p) \ra 0$ as $p \ra 0$.
If there are zeros on both sides of (\ref{eq:mgmmbin}), i.e. $p_{i|I}=0=\pjJ$ for indices $i|I \in \supp(u^+)$ and $j|J \in \supp(u^-)$, then the relation holds with $0=0$.  

We may assume $\piI = 0$ for some index $i|I \in \supp(u^+)$ in considering the  two remaining cases.  The first case has $\pjJ = 1$ for some index $j|J \in \supp(u^+)$.  Because of the multigrading of $\mcA$, which requires  for any $J \in \scrE$ and $u \in \ker \mcA$ that $\sum_{j \in J}u_{j|J}=0$, it must be that there exists $k|J \in \supp(u^-)$.  Then since $\bfp \in \prod_{I \in \scrE}\Delta_I$, we have $p_{k|J} = 0$ and the relation (\ref{eq:mgmmbin}) holds as $0=0$.  

The second case has $0 \leq \pjJ <1$ for all $j|J \in \supp(u^+)$ and $0< \pkK \leq 1$ for all $k|K \in \supp(u^-)$.  Then for small $t$, $\bfp^* + tu \in P_\mcA(b)$. 
Then we have 
\begin{equation} \label{eq:mpmm}
\frac{dD}{dt}  = \sum_{\{i|I \, : \, \piI = 0\}} u_{i|I} (\piI + t u_{i|I}) + \sum_{\{j|J \, :  \, \pjJ \neq 0\}}  u_{j|J} (\pjJ + t u_{j|J}) .
\end{equation}
Then the first term on the right hand side of (\ref{eq:mpmm}) approaches negative infinity as $t \ra 0$ while the second approaches a constant; this contradicts the optimality of $\bfp^*$, so this case cannot arise.
\end{proof}
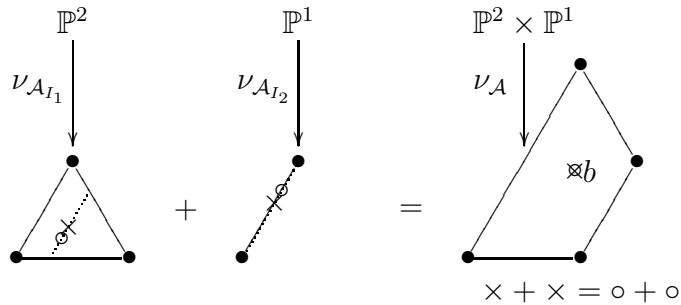
\begin{figure}[htb]
\[
\begin{xy}<1.5cm,0cm>:
(0,0) *{\bullet};
      p+(.5,0.86) *{\bullet} **@{-};
      p+(.5,-0.86) *{\bullet} **@{-};
      p+(-1,0) **@{-};
(1.5,0.43) *{+};
(2,0) *{\bullet};
      p+(.5,0.86) *{\bullet} **@{-};
(3.5,0.43) *{=};
(4,0) *{\bullet};
      p+(1,1.72) *{\bullet} **@{-};
      p+(.5,-0.86) *{\bullet} **@{-};
      p+(-.5,-0.86) *{\bullet} **@{-};
      p+(-1,0) **@{-};
(.3,0);
      p+(.325,0.559)  **@{.}, 
      p+(.1,0.172) *{\circ}, 
      p+(.1625,0.2795) *{\times}; 
(2,0);
      p+(.5,0.86)  **@{.},
      p+(.2875,0.4945) *{\times}, 
      p+(.35,0.602) *{\circ};  
(4.5,0);
      p+(.45,.774) *{\times} *{\circ} *+!L{b}; 
(5,-.3) *{\times + \times = \circ + \circ};
(.5,2) *+!{\PP^2};
      p+(0,-1)  **@{-} *@{>},
      p+(-.3,-.5) *+!{\nu_{\mcA_{I_1}}};
(2.5,2) *+!{\PP^1};
      p+(0,-1)  **@{-} *@{>},
      p+(-.3,-.5) *+!{\nu_{\mcA_{I_2}}};
(4.5,2) *+!{\PP^2 \times \PP^1};
      p+(0,-1)  **@{-} *@{>},
      p+(-.3,-.5) *+!{\nu_\mcA};
\end{xy}
\]
\caption{Ambiguity arising from Minkowski sum of simplices: two points appearing in the fiber over $b$ in $\prod_{I \in \scrE} \Delta_I$.  For any point on the dotted line, there is a point in the second simplex such that their sum is $b$.  We choose $\times$ among these points by maximizing entropy in the conditional probability distribution. See Figure \ref{fig:TobleroneBlowup} for the space of solutions.
} \label{fig:MinkSumAmbig}
\end{figure}

\begin{figure}[htb]
\[
\SelectTips{cm}{10}
\begin{xy}<2cm,0cm>:
(2,0) ="F3" *+!R{} *{\bullet};
      p+(.5,\sinpioverthree)  ="F1" *+!D{} *{\bullet} **@{-};
      p+(.5,-.86)  ="F2" *+!L{} *{\bullet} **@{-};
      p+(-1,0) **@{-};
(2.7,.7)  ="B3" *+!R{} *{\bullet};
      p+(.5,\sinpioverthree)  ="B1" *+!D{} *{\bullet} **@{-};
      p+(.5,-.86)  ="B2" *+!L{} *{\bullet} **@{-};
      p+(-1,0) **@{-};
(3.4,1.4) *+!DR{} *{\bullet};
      p+(.5,\sinpioverthree) *+!L{(1:0:0)} *{\bullet} **@{-};
        p+(.5,\sinpioverthree) **@{-},
        p+(-1,-1.72)  **@{-},
      p+(.5,-.86) *+!DL{(0:1:0)} *{\bullet} **@{-};
        p+(.25,-.43) **@{-},
        p+(-1,1.72) **@{-},
      p+(-1,0) **@{-};
        p+(-1,0) **@{-},
        p+(2,0) **@{-},
{\ar@{-}^{E} "F1"; "B1"};
"F1";"B1" **@{-};
"F2";"B2" **@{-};
"F3";"B3" **@{-};
"F3";
      p+(.1,.172)  ="Fe1";
      p+(.1,.172)  ="Fe2";
      p+(.1,.172)  ="Fe3";
      p+(.1,.172)  ="Fe4";
"B1";
      p+(.1,-.172)  ="Be1";
      p+(.1,-.172)  ="Be2";
      p+(.1,-.172)  ="Be3";
      p+(.1,-.172)  ="Be4";
"Fe1";"Be4" **@{--};
"Fe2";"Be3" **@{--};
"Fe3";"Be2" **@{--};
"Fe4";"Be1" **@{--};
"F3";"B2" **@{--};
"B2";
      p+(1.1,-.1)="trap1";
"trap1";
      p+(-.25,-.43) **@{-};
      p+(1,0) **@{-};
      p+(-.25,.43) **@{-};
      p+(-.5,0) **@{-};
"B2";
      p+(.1,-.2) = "StartArrow",
      p+(.9,-.3) = "EndArrow";
{\ar@{->}^{\nu} "StartArrow"; "EndArrow"};
\end{xy}
\]
\caption{The space of conditional probability distributions is the blow-up of $\PP^2$ at the point $p_2=p_3=0$ of Figure \ref{fig:projprobsimplex}, intersected with a triangular prism.  In general and in higher dimensions, blow-ups are along the conditioned-upon faces.  $E$ has homogeneous coordinates $(p_{2|23}:p_{3|23})$ and the triangle has homogeneous coordinates $(p_1:p_2:p_3)$.
} \label{fig:TobleroneBlowup}
\end{figure}

We now give a couple of examples.

\begin{example}
For the case $m=3$ with $\scrE = \{ 12,13,23,123\}$, the matrix $\mcA$ is
\[
\bordermatrix{
& p_1 & p_2 & p_3 & p_{1|12} & p_{2|12} & p_{1|13} & p_{3|13} & p_{2|23} & p_{3|23}\cr
1   & 1& 0& 0& 1& 0& 1& 0& 0& 0\cr
2   & 0& 1& 0& 0& 1& 0& 0& 1& 0\cr
3   & 0& 0& 1& 0& 0& 0& 1& 0& 1\cr
12  & 0& 0& 0& 1& 1& 0& 0& 0& 0\cr
13  & 0& 0& 0& 0& 0& 1& 1& 0& 0\cr
23  & 0& 0& 0& 0& 0& 0& 0& 1& 1\cr
123 & 1& 1& 1& 0& 0& 0& 0& 0& 0\cr
}
\]
The $U$-coordinate rows are labeled $1,2,3$ and the $V$-coordinate rows are labeled $12,13,23, 123$.  The polytope $\mconv(\mcA)$ is the permutohedron which is the convex hull of the permutations of $(3,1,0)$, shown in Figure \ref{fig:superman}.
\begin{figure}[htb]
\[
   \begin{xy}<1cm,0cm>:
(-.86,2.5)  *{\bullet} ="123"   *+!RD{301};
(.86, 2.5)   *{\bullet} ="132"   *+!LD{310};
(2.58,-.5)  *{\bullet} ="312"   *+!L{130};
(1.72,-2)    *{\bullet} ="321"   *+!LU{031};
(-1.72,-2)   *{\bullet} ="231"   *+!RU{013};
(-2.58,-.5) *{\bullet} ="213"   *+!R{103};
(1.72,1) *{\bullet} *+!R{220};
(-1.72,1) *{\bullet} *+!L{202};
(0,-2) *{\bullet} *+!D{022};
(0,1) *{\bullet} *+!U{211};
(0.86,-.5) *{\bullet} *+!DR{121};
(-.86,-.5) *{\bullet} *+!DL{112};
   "123";"132" **@{-};
   "132";"312" **@{-};
   "312";"321" **@{-};
   "321";"231" **@{-};
   "231";"213" **@{-};
   "213";"123" **@{-};
   \end{xy}
\]
\caption{Multigraded convex hull of $\mcA$ for $n=3$ and $\scrE=\{I\subseteq [n] : |I| \geq 2\}$.  The last four coordinates, not shown, are all 1.} \label{fig:superman}
\end{figure}
Letting $\mcA'$ be the last six columns of $\mcA$ (restriction to $\{I \subseteq [n]: |I|=2\}$), $\mconv(\mcA')$ is the regular permutohedron $\conv((2,1,0)$, $(2,0,1)$, $(1,0,2)$, $(1,2,0)$, $(0,2,1)$, $(0,1,2))$, lifted with the last four coordinates all $1$.  This is  illustrated in Figure \ref{fig:regularmconv}.
\begin{figure}[htb]
\[
\begin{xy}<10mm,0cm>:
(-.5,\sinpioverthree)  *{\bullet} ="123"   *+!RD{201};
(.5 ,\sinpioverthree)  *{\bullet} ="132"   *+!LD{210};
(1,0)                  *{\bullet} ="312"   *+!L{120};
(.5,-\sinpioverthree)  *{\bullet} ="321"   *+!LU{021};
(-.5,-\sinpioverthree) *{\bullet} ="231"   *+!RU{012};
(-1,0)                 *{\bullet} ="213"   *+!R{102};
   "123";"132" **@{-};
   "132";"312" **@{-};
   "312";"321" **@{-};
   "321";"231" **@{-};
   "231";"213" **@{-};
   "213";"123" **@{-};
\end{xy}
\]
\caption{Multigraded convex hull of $\mcA$ for $n=3$ and $\scrE=\{I\subseteq [n] : |I|=2\}$. The last four coordinates, not shown, are all $1$} \label{fig:regularmconv}
\end{figure}

\end{example}

The theorem of Mat\'u{\v s} (Theorem \ref{thm:Matusthm}) works in this way by projecting first from $\scrE = \{I:|I|\geq 2 \}$ to  $\scrE = \{I:|I| = 2 \}$ as in Figure \ref{fig:regularmconv}.  Thus the result may be understood as saying that instead of all simplices, we can obtain a regular permutohedron merely as the zonotope given by the Minkowski sum of the $1$-simplices.

\section{Partially observed discrete random variables} \label{sec:caseofrv}

Let $X_1, \dots, X_n$ be discrete random variables with $X_i$ taking values $x_i^1, \dots, x_i^{d_i}$.  Then the $m=\prod_{i=1}^n d_i$ singleton events in $\Omega$ are the elements of the Cartesian product of the sets of states which each random variable may assume.  
For a subset of random variables $X_{i_1}, \dots, X_{i_k}$ with $S:=\{i_1, \dots, i_k\} \subseteq [n]$, we write $\Omega_S$ for the Cartesian product of the states of this subset of the random variables.  We also denote by $x|_S$ the restriction of some global state $x \in \Omega$ to the states of the random variables in $S$.
 Then the set of events $\scrE$ has the form:
\begin{equation} \label{eq:scrErv}
\scrE = \{ x' \in \Omega \; : \; x'|_S = x_S \text{ for some } S \subseteq [n], x_S \in \Omega_S \}
\end{equation}

Let $E(x_S)$ denote the event which is the union of all singleton events with random variables $S$ in state $x_S$.  For example, let $n=3$, $d_i =2$ with states denoted $0$ and $1$, and $S=\{1,3\}$.  Then $E(x^0_1 x^1_3) = \{0010,0011,0110,0111\}$, which corresponds to a 2-face of the 4-cube.  Now we may write with the more usual notation
\[
p_{x_A | x_B} := p_{E(x_A) \cap E(x_B) | E(x_B)}
\]
which is convenient for considering, say, the conditional probability of having a disease given a positive test result.  
Besag's relation (\ref{eq:BesagRelation}) among positive conditional probabilities is written this way:
\begin{equation} 
\frac{P(\mathbf{x})}{P(\mathbf{y})} = \prod_{i=1}^n \frac{P(x_i|x_1, \dots, x_{i-1}, y_{i+1}, \dots, y_n)}{P(y_i|x_1, \dots, x_{i-1}, y_{i+1}, \dots, y_n)}.
\end{equation}
This is a special case of the relations derived in Theorem \ref{thm:CPuniversalGB}, as we now explain.

Denote the event $x_1, \dots, x_{j-1}, y_{j}, \dots, y_n$ by $j$, so the singleton events are $(y_1, \dots, y_n)= 1,2, \dots, n+1 =(x_1, \dots, x_n)$.  The set $\scrE$ consists of the event $\{1, \dots, n+1\}$ together with the events $\{j,j+1\}$ for $j=1, \dots, n$.  Then the cleared-denominator version of (\ref{eq:BesagRelation}) is the outer cycle $[n+1] \ra 1 \leftarrow 12 \ra 2 \leftarrow \cdots \leftarrow n,n+1 \ra n+1 \leftarrow [n+1]$ in the graph $G_{\scrE}$.  For example, with three variables we have events $1=(y_1,y_2,y_3)$, $2=(x_1,y_2,y_3)$, $3=(x_1,x_2,y_3)$, and $4=(x_1,x_2,x_3)$.  The relation  (\ref{eq:BesagRelation}) is \[
\frac{p_4}{p_1} = \frac{p_{2|12} p_{3|23}p_{4|34}}{p_{1|12} p_{2|23} p_{3|34} },
\]
corresponding to the cycle binomial 
\[
p_1 p_{2|12} p_{3|23}p_{4|34} - p_4 p_{1|12} p_{2|23} p_{3|34}, 
\]
which is $f_C$ for the outer cycle $C$ of the graph in Figure \ref{fig:bipartite12-23-34-1234}.

\begin{figure}[htb]
\[ \SelectTips{cm}{10}
   \begin{xy}<1.2cm,0cm>:
(0,0) *+!{1234}="1234";
      p+(-1,1) *+!{1}="1",
      p+(1,1) *+!{2}="2",
      p+(1,-1) *+!{3}="3",
      p+(-1,-1) *+!{4}="4",
      p+(0,1) *+!{12}="12",
      p+(1,0) *+!{23}="23",
      p+(0,-1) *+!{34}="34",
{\ar@{->}^{p_{1}} "1234"; "1"};
{\ar@{->}^{p_{2}} "1234"; "2"};
{\ar@{->}_{p_{3}} "1234"; "3"};
{\ar@{->}_{p_{4}} "1234"; "4"};
{\ar@{->}_{p_{1|12}} "12"; "1"};
{\ar@{->}^{p_{2|12}} "12"; "2"};
{\ar@{->}_{p_{2|23}} "23"; "2"};
{\ar@{->}^{p_{3|23}} "23"; "3"};
{\ar@{->}_{p_{3|34}} "34"; "3"};
{\ar@{->}^{p_{4|34}} "34"; "4"};
   \end{xy}
\]
\caption{Bipartite graph for $\scrE = \{ \{1,2\},\{2,3\},\{3,4\},\{1,2,3,4\}\}$.} \label{fig:bipartite12-23-34-1234}
\end{figure}
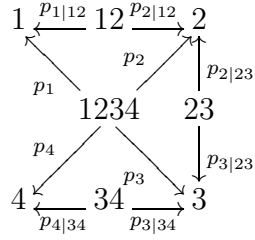

\section{Bayes' rule} \label{sec:BayesExplanation}
Because of the Bayes binomials, on points which are projective conditional probability distributions, we have, with $i,j \subseteq J \subseteq K \subseteq [m]$,
\[ \piK \pjJ = \pjK \piJ . \]
This implies, by summing over $j \in J$, that
\begin{equation} \label{eq:summedBayesBin}
\piK \pJJ = \pJK \piJ . 
\end{equation}
Using two copies of (\ref{eq:summedBayesBin}) with different intermediate sets $J_1$ and $J_2$, we have 
\[
(p_{i|J_1} p_{J_1 | K}) p_{J_2| J_2} = p_{i|K} p_{J_1 | J_1} p_{J_2 | J_2} = (p_{i | J_2} p_{J_2 | K}) p_{J_1 | J_1} 
\]
which gives a multihomogeneous version of Bayes' rule.  Because we consider the point representing a projective conditional probability distribution as an element of $((p_{i_1 | I}: \cdots : p_{i_{|I|} |I}), I \in \scrE)$, we may set  $ p_{J_1| J_1}$ and $ p_{J_2| J_2}$ to $1$ on an open set containing all probabilistically relevant points, and summing over $i \in I$, this becomes
\[
p_{I|J_1} p_{J_1 | K} =  p_{I | J_2} p_{J_2 | K}.
\]
Or when $ p_{J_1 | K} \neq 0$, 
\[
p_{I|J_1} = \frac{  p_{I | J_2} p_{J_2 | K}}{ p_{J_1 | K}}
\]
so that in particular, with $A, B \subseteq [m]$, and setting $I = A \cap B$, $J_1 = B$, $J_2 = A$, and $K=[m]$ we have the familiar expression for Bayes' rule
\[
p_{A \cap B | B} = \frac{p_{A \cap B | A} p_{A}}{p_B}.
\]


\section{Appendix: Toric ideals and toric varieties} \label{sec:toricvar}
Here we collect some needed facts about toric ideals and toric varieties based primarily on Sturmfels' book \cite{GBCP}, also referring to \cite{CoxResolutions, Ewald, Fulton1993, MillerSturmfels, ASCB}.
\subsection{Affine toric varieties}
Let $\mcA$ be a $d \times m$ integer matrix, with columns $a_{\cdot, 1} , \dots, a_{\cdot m}$.  Let $\C[x_1, \dots, x_m]$ be a polynomial ring in $m$ variables, and for $u \in \Z^m$ let $x^u = \prod_{j=1}^m x_j^{u_j}$.  The matrix $\mcA$ defines a {\em toric ideal}
\[
I_{\mcA}  = \langle x^{u^+} - x^{u^-} : u \in \ker \mcA \cap \Z^m \rangle ,
\]
where $u^+$ is the positive part of $u$ and $u^-$ the negative.  
The toric ideal $I_\mcA$ is a prime ideal.  A minimal set of binomials which
generates $I_{\mathcal{A}}$ is said to be a {\em Markov basis} for
the matrix $\mathcal{A}$. 
A term order is a total order on the monomials of a polynomial ring such that $1$ is the unique minimal element and $m_1 \succ m_2$ implies $m_3 m_1 \succ m_3 m_2$ for any monomials $m_1, m_2, m_3$.  This order defines the initial monomial of any polynomial, and the initial ideal of an ideal $I$ is generated by the initial monomials $\initial_{\succ} f$ for all $f \in I$.  
A {\em Gr\"obner basis} $\{f_1, \dots, f_k\}$ for an ideal $I$ with respect to a monomial term order $\succ$ has $\initial_{\succ}(I) = \langle \initial_{\succ}(f_1), \dots \initial_{\succ}(f_k) \rangle$.  A Gr\"obner basis is {\em universal} \index{Gr\"obner basis!universal} if it is a Gr\"obner basis for all term orders $\succ$.  For polynomials $f$ and $g$ and term order $\succ$, let $m(f,g)$ be the least common multiple of their leading monomials, and let $f_0$, $g_0$ be their leading terms.  Then their $S$-polynomial is $\frac{m(f,g)}{f_0}f - \frac{m(f,g)}{g_0}g$ and is used in Buchberger's algorithm.

In the affine space $\C^m$ with coordinates $x_1, \dots, x_m$, the ideal $I_\mcA$ cuts out the affine toric variety $X_\mcA$.  The $\R_{\geq 0}$-span of the columns of $\mcA$ define a cone $\pos(\mcA)$, and the $\N$-span defines a semigroup $\N \mcA$.  The corresponding semigroup ring $\C[\N \mcA]$ is isomorphic to the affine coordinate ring $\C[x_1, \dots, x_m]/I_\mcA$, i.e.$X_\mcA \isom \Spec (\C[x_1, \dots x_m]/I_\mcA) \isom \Spec \C[\N \mcA]$.  Such varieties are not always normal.
The matrix $\mcA$ defines a map $\mbff_\mcA: \theta \mapsto \theta^\mcA$ from the $d$-dimensional torus $\TT_d$ to the toric variety $X_{\mcA}$.  This gives an explicit torus action and torus embedding.  The closure of the image of $\mbff$ is $X_\mcA$.  
This is also the parameterization map of an exponential family.

\subsection{Polytopes and projective toric varieties}
Let $\conv(\mcA)$ be the convex hull of the columns of $\mcA$.  This is a polytope.
Let $Y_{\mcA}$ be the projective toric variety defined by taking the closure of the image of $\mbff_\mcA$, and viewing $x_1, \dots, x_m$ as homogeneous coordinates.  The corresponding homogeneous toric ideal is the ideal
\begin{equation} \label{eq:toricideal}
J_\mcA = \langle x^{u^+} - x^{u^-} : u \in \ker \mcA \cap \Z^m, \; \|u^+\|_1 = \|u^-\|_1 \rangle.
\end{equation}
The affine cone over $Y_\mcA$ is the toric variety $X_{\mcA'}$, where $\mcA'$ is $\mcA$ with a row of ones added at the bottom unless the vector of all ones already lies in $\rowspan(\mcA)$.  This induces homogeneity with respect to the $\Z$-grading.  When $\mcA$ has $(1,1, \dots, 1)$ in its row span (e.g. by having equal column sums or $(1, 1, \dots, 1)$ as a row), we say it is $\Z$-graded and the norm restriction in (\ref{eq:toricideal}) is not required.  Instead of $(1,1, \dots, 1)$, we can use another grading of the columns of $\mcA$ to obtain multihomogeneous ideals.

\subsection{The moment map} \label{sec:MomentMap}
The moment map sends a projective toric variety $Y_\mcA$ onto its polytope $\conv(\mcA)$, bijectively on the nonnegative part of the variety.  Theorem \ref{thm:MultiMomentMap} is a version of this result for toric varieties in a product of projective spaces. 

\begin{thm} \label{thm:MomentMap}
Let $\mcA$ be a $d \times m$,  $\Z$-graded matrix, and $Y_\mcA$ the corresponding projective toric variety.  Then the map
\[
\mu: Y_\mcA \ra \conv( \mcA), \;\; \text{given by} \]\[
\mu(y) = \frac{1}{Z(y)} \sum_j |y_j| a_{\cdot j},
\]
where $Z(y) = \sum_j |y_j|$, is a bijection from $Y_{\mcA, \geq 0}$ onto $\conv(\mcA)$.  If further $\rank(\mcA) =d$, with  $\mbff_\mcA$ the torus embedding, then $\mu \circ \mbff_\mcA$ is homeomorphism $\R^d_{> 0} \ra \conv(\mcA)^\circ$. 
\end{thm}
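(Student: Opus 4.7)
The plan is to prove the two claims separately: first that $\mu$ is a bijection $Y_{\mcA,\geq 0}\to \conv(\mcA)$, and then that upon restriction to the open torus (under $\rank(\mcA)=d$) the composite $\mu\circ\mbff_\mcA$ is a homeomorphism onto $\conv(\mcA)^\circ$. The map $\mu$ visibly lands in $\conv(\mcA)$ since the coefficients $|y_j|/Z(y)$ are nonnegative and sum to one, and it is continuous on $Y_{\mcA}\setminus\{0\}$. By the $\Z$-grading hypothesis we may choose normalized representatives $y\in\Delta_{m-1}$ for points of $Y_{\mcA,\geq 0}$, so that $Z(y)=1$ and $\mu(y)=\mcA y$.

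For the bijection, the approach is a Birch-style maximum entropy argument, which is simply the one-factor version of the proof of Theorem~\ref{thm:MultiMomentMap}. For each $b\in\conv(\mcA)$ the fiber $F_b=\{y\in\Delta_{m-1}:\mcA y = b\}$ is nonempty (since $b$ is a convex combination of columns) and compact, so the strictly convex function $H(y)=\sum_j y_j\log y_j$ attains a unique minimizer $y^*$ on $F_b$. The KKT conditions give $\log y_j^* = (\mcA^\top\nu)_j + \lambda$ for some $\nu\in\R^d$ and $\lambda\in\R$; since $(1,\ldots,1)\in\rowspan(\mcA)$ by the $\Z$-grading, the constant $\lambda$ can be absorbed into $\nu$. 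Hence for every $u\in\ker\mcA\cap\Z^m$,
\[
\sum_j u_j\log y_j^* \;=\; \nu^\top \mcA u \;=\; 0,
\]
which, after grouping by sign of $u_j$, yields $(y^*)^{u^+}=(y^*)^{u^-}$. Thus $y^*\in Y_{\mcA,\geq 0}\cap F_b$, giving surjectivity. For injectivity, any $y\in Y_{\mcA,\geq 0}\cap F_b$ satisfies these same binomial relations, which are exactly the first-order conditions for stationarity of $H$ on $F_b$; by strict convexity the critical point is unique, so $y=y^*$.

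For the homeomorphism statement, observe that $Y_{\mcA,\geq 0}$ is compact (as a closed subset of the compact simplex of normalized representatives), so the continuous bijection $\mu|_{Y_{\mcA,\geq 0}}$ onto the Hausdorff polytope $\conv(\mcA)$ is automatically a homeomorphism. Restricting to strictly positive coordinates gives a homeomorphism onto the relative interior $\conv(\mcA)^\circ$. Under the rank hypothesis $\rank(\mcA)=d$, the torus embedding $\mbff_\mcA\colon\R^d_{>0}\to Y_{\mcA,>0}$ is continuous, surjective onto the positive part, and injective: if $\theta^\mcA=\tilde\theta^\mcA$ then $(\theta/\tilde\theta)^\mcA=1$, and full row rank of $\mcA$ forces $\theta=\tilde\theta$. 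Continuity of the inverse follows because in logarithmic coordinates $\mbff_\mcA$ is the linear map $\mcA^\top$, which admits a continuous left inverse when it has full row rank. Composing with the homeomorphism $\mu$ gives the desired $\mu\circ\mbff_\mcA\colon\R^d_{>0}\to\conv(\mcA)^\circ$.

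The main obstacle will be the boundary analysis in the Birch argument, exactly as in Theorem~\ref{thm:MultiMomentMap}: when $y^*$ lies on $\partial\Delta_{m-1}$, the first-order condition $\sum_j u_j\log y_j^*=0$ must be interpreted with care to still yield $(y^*)^{u^+}=(y^*)^{u^-}$. The case analysis mirrors that proof: if $\supp(u^+)$ and $\supp(u^-)$ both contain a zero coordinate of $y^*$, the binomial reduces to $0=0$; if only one side does, the constraint $\sum_j u_j = 0$ coming from $(1,\ldots,1)\in\rowspan(\mcA)$ forces the other side to contain a zero as well, or a perturbation argument (letting $t\to 0$ in $H(y^*+tu)$) produces a contradiction to optimality via the logarithmic blow-up. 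Everything else reduces to routine convex analysis and the standard identification of normal toric binomials with exponential-family sufficient statistics.
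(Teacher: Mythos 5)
Your main argument for the bijection claim is essentially the paper's own proof: normalize representatives to the simplex, factor $\mu$ through $\bfp \mapsto \mcA\bfp$, and select the unique point of the fiber $F_b$ by minimizing a strictly convex entropy, showing via first-order conditions that the minimizer satisfies $y^{u^+}=y^{u^-}$ for all $u\in\ker\mcA$. The paper phrases stationarity as $dD/dt=0$ along directions $u\in\ker\mcA$ rather than via KKT multipliers, and uses the KL-divergence to the uniform distribution, which differs from your $H$ by a constant on the simplex; your deferred boundary analysis is the same case analysis the paper carries out. One caution on your injectivity step (which the paper merely asserts): at a boundary point the binomial relations are not literally ``the first-order conditions for stationarity of $H$ on $F_b$'' -- a point with zero coordinates satisfies most binomials trivially as $0=0$ -- so that step really needs the support/face stratification of $Y_{\mcA,\geq 0}$, not just strict convexity.

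The genuine gap is in your treatment of the second sentence. Since $\mcA$ is $\Z$-graded there is $c$ with $c^{T}\mcA=(1,\ldots,1)$, and replacing $\theta$ by $\theta_t$ with $\log\theta_t=\log\theta+tc$ multiplies every coordinate $\theta^{a_{\cdot j}}$ by $e^{t}$; the normalization $1/Z$ in $\mu$ cancels this factor, so $\mu\circ\mbff_\mcA(\theta_t)=\mu\circ\mbff_\mcA(\theta)$ for all $t$. Thus your injectivity argument (full row rank forces $\theta=\tilde\theta$ from $\theta^{\mcA}=\tilde\theta^{\mcA}$) applies to the affine parametrization but not to the composite, which is invariant under this one-parameter scaling; indeed no homeomorphism $\R^{d}_{>0}\to\conv(\mcA)^{\circ}$ can exist under the stated hypotheses, since the grading forces $\dim\conv(\mcA)\leq d-1$ while $\R^{d}_{>0}$ has dimension $d$. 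What your compactness argument does correctly establish is that $\mu$ restricts to a homeomorphism from the strictly positive part $Y_{\mcA,>0}$ onto $\conv(\mcA)^{\circ}$, i.e. the second claim holds only after quotienting $\R^{d}_{>0}$ by the scaling subgroup (or restricting to a complementary subtorus). The paper's own proof is silent on this part, so you are not missing anything it supplies, but as written your argument for it does not go through.
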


The result is standard and a proof can be found in \cite{GBCP,Fulton1993,Ewald} and goes by the name Birch's theorem in statistics.

\bibliography{mybib}
\end{document}